\documentclass[preprint,11pt]{imsart}

\RequirePackage[OT1]{fontenc}
\RequirePackage{amsthm,amsmath}
\RequirePackage[numbers]{natbib}
\RequirePackage[colorlinks,citecolor=blue,urlcolor=blue]{hyperref}

\usepackage{amssymb}

% settings
%\pubyear{2005}
%\volume{0}
%\issue{0}
%\firstpage{1}
%\lastpage{8}

\startlocaldefs
\numberwithin{equation}{section}
\theoremstyle{plain}
\newtheorem{theorem}{Theorem}[section]

\newtheorem{lemma}{Lemma}[section]

\newtheorem{conjecture}{Conjecture}[section]

\theoremstyle{definition}
\newtheorem{remark}[theorem]{Remark}
\endlocaldefs

\newcommand{\bea}{\begin{eqnarray}}
\newcommand{\ena}{\end{eqnarray}}
\newcommand{\beq}{\begin{equation}}
\newcommand{\enq}{\end{equation}}
\newcommand{\beas}{\begin{eqnarray*}}
\newcommand{\enas}{\end{eqnarray*}}

\begin{document}

\begin{frontmatter}
\title{The rate of convergence of some asymptotically chi-square distributed statistics by Stein's method}
\runtitle{Approximation of chi-square statistics by Stein's method}

\begin{aug}
\author{\fnms{Robert E.} \snm{Gaunt}}
\and
\author{\fnms{Gesine} \snm{Reinert}}

\runauthor{Gaunt and Reinert}

\affiliation{University of Oxford\thanksmark{m1}}

\address{Department of Statistics\\
University of Oxford\\
24-29 St$.$ Giles'\\
Oxford OX1 3LB\\
United Kingdom }
\end{aug}

\begin{abstract}
We build on recent works on Stein's method for functions of multivariate normal random variables to derive bounds for the rate of convergence of some asymptotically chi-square distributed statistics.  We obtain some general bounds and establish some simple sufficient conditions for convergence rates of order $n^{-1}$ for smooth test functions.  These general bounds are applied to Friedman's statistic for comparing $r$ treatments across $n$ trials and the family of power divergence statistics for goodness-of-fit across $n$ trials and $r$ classifications, with index parameter $\lambda\in\mathbb{R}$ (Pearson's statistic corresponds to $\lambda=1$).  We obtain a $O(n^{-1})$ bound for the rate of convergence of Friedman's statistic for any number of treatments $r\geq2$.  We also obtain a $O(n^{-1})$ bound on the rate of convergence of the power divergence statistics for any $r\geq2$ when $\lambda$ is a positive integer or any real number greater than 5.  We conjecture that the $O(n^{-1})$ rate holds for any $\lambda\in\mathbb{R}$. 
\end{abstract}

\begin{keyword}[class=MSC]
\kwd[Primary ]{60F05}
\kwd{62G10}
\kwd{62G20}
\end{keyword}

\begin{keyword}
\kwd{Stein's method}
\kwd{Friedman's statistic}
\kwd{power divergence statistics}
\kwd{chi-square approximation}
\kwd{functions of multivariate normal random variables}
\kwd{rate of convergence}
\end{keyword}

\end{frontmatter}

\section{Introduction}

In this paper, we use Stein's method, introduced in 1972 by Stein \cite{stein}, to obtain bounds on the rate of convergence of some asymptotically chi-square distributed statistics.  In particular, we make use of a recent variant of Stein's method, due to \cite{gaunt normal}, that allows one to obtain approximation theorems when the limit distribution can be represented as a function of multivariate normal random variables.  In this paper, we achieve two goals.  Firstly, we obtain bounds on the rate of convergence of Friedman's statistic and the power divergence family of statistics that improve on those from the existing literature.  Secondly, in deriving these bounds we generalise some of the theory developed in the recent work of \cite{gaunt normal}.  We demonstrate that the theory can be applied in situations in which there is a dependence amongst the random variables of interest (these being, for example, the rankings of treatments across the trials for Friedman's statistic).  We also obtain some simple sufficient conditions for $O(n^{-1})$ convergent rates that weaken those of \cite{gaunt normal}.  The theory developed in this paper allows for the distributional approximation of a large class of statistics (which include the Friedman and Pearson statistics as popular special cases) to be treated within one framework.

\subsection{Chi-square statistics for complete block designs}

In this paper, we study the rate of convergence of a class of statistics for non-parametric tests for complete block designs.  That is, statistics for comparing $r$ treatments or classifications across $n$ independent trials.  In Section 2, we develop some general theory and in Section 3 this theory is applied to several common chi-square statistics, which we now present.

\subsubsection{Friedman's chi-square statistic}

Friedman's chi-square test \cite{friedman} is a non-parametric statistical test that, given $r$ treatments across $n$ independent trials,  can be used to test the null hypothesis that there is no treatment effect against the general alternative.  Suppose that for the $i$-th trial we have the ranking $\pi_i(1),\ldots,\pi_i(r)$, where $\pi_i(j)\in\{1,\ldots,r\}$, over the $r$ treatments.  Under the null hypothesis, the rankings are independent permutations $\pi_1,\ldots,\pi_n$, with each permutation being equally likely.  Let $X_{ij}=\frac{\sqrt{12}}{\sqrt{r(r+1)}}\big(\pi_i(j)-\frac{r+1}{2}\big)$ and set $W_j=\frac{1}{\sqrt{n}}\sum_{i=1}^n X_{ij}$.  Then the Friedman chi-square statistic, given by
\begin{equation}\label{miltonf}F_r=\sum_{j=1}^rW_j^2,
\end{equation}
is asymptotically $\chi_{(r-1)}^2$ distributed under the null hypothesis.

\subsubsection{Pearson's chi-square and the power divergence family of statistics}

Another non-parametric test for complete block designs is Pearson's chi-square goodness-of-fit test, introduced in \cite{pearson}.  Consider $n$ independent trials, with each trial leading to a unique classification over $r$ classes.  Let  $p_1,\ldots,p_r$ represent the non-zero classification probabilities, and let $U_1,\ldots,U_r$ represent the observed numbers arising in each class.  Then Pearson's chi-square statistic, given by
\begin{equation} \chi^2 = \sum_{j=1}^r \frac{(U_j - n p_j)^2}{n p_j}, \end{equation}
is asymptotically $\chi_{(r-1)}^2$ distributed.  Pearson's statistic is a special case ($\lambda=1$) of the so-called power divergence family of statistics introduced by \cite{cr84}:
\begin{equation}\label{powerdiv}T_\lambda(\mathbf{W})=\frac{2}{\lambda(\lambda+1)}\sum_{j=1}^rU_j\bigg[\bigg(\frac{U_j}{np_j}\bigg)^\lambda-1\bigg].
\end{equation}
The statistic $T_\lambda(\mathbf{W})$ is asymptotically $\chi_{(r-1)}^2$ distributed for all $\lambda\in\mathbb{R}$.  When $\lambda=0,-1$, the notation (\ref{powerdiv}) should be understood a result of passage to the limit (see \cite{ulyanov}, Remark 1).  Indeed, the case $\lambda=0$ corresponds to the log-likelihood ratio statistic and the case $\lambda=-1/2$ is the Freeman-Tukey statistic (see \cite{ulyanov}, Remark 2).

\subsubsection{Rates of convergence of chi-square statistics}

In the existing literature, the best bound on the rate of convergence of Friedman's statistic is the following Kolmogorov distance bound of \cite{jensen}:
\begin{equation*}\sup_{z\geq0}|\mathbb{P}(F_r\leq z)-\mathbb{P}(Y\leq z)|\leq C(r)n^{-r/(r+1)},
\end{equation*}
where the (non-explicit) constant $C(r)$ depends only on $r$ and $Y\sim\chi_{(r-1)}^2$. The rate of convergence of other asymptotically chi-square distributed has also received attention in the literature.  For Pearson's statistic over $n$ independent trials with $r$ classifications, it was shown by \cite{yarnold} using Edgeworth expansions that the rate of convergence of Pearson's statistic, in the Kolmogorov distance was $O(n^{(r-1)/r})$, for $r\geq2$, which was improved by \cite{gu03} to $O(n^{-1})$ for $r\geq6$.  Also,  \cite{ulyanov} and \cite{asylbekov} have used Edgeworth expansions to study the rate of convergence of the more general power divergence family of statistics.  For $r\geq4$,  \cite{ulyanov} obtained a $O(n^{(r-1)/r})$ bound on the rate of convergence in the Kolmogorov distance and, for $r=3$, \cite{asylbekov} obtained a $O(n^{-3/4+0.065})$ bound on the rate of convergence in the same metric, with both bounds holding for all $\lambda\in\mathbb{R}$.

To date, the application of Stein's method to the problem of determining rates of convergence of asymptotically chi-square distributed statistics has been quite limited.  In particular, there has been no application to Friedman's statistic in the literature.  Pearson's statistic, however, has received some treatment.    An investigation is given in the unpublished papers \cite{mann} and \cite{mann2}, with a $O(n^{-1/2})$  Kolmogorov distance bound given for Pearson's statistic with general null distribution.  In a recent work \cite{gaunt chi square}, a bound of order $n^{-1}$, for smooth test functions, was obtained.  This bound is valid under any null distribution provided $r\geq2$, and involves the classification probabilities, under the null model, $p_1,\ldots,p_r$ correctly in the sense that the bound goes to zero if and only if $np_*\rightarrow\infty$, where $p_*=\min_{1\leq i\leq r}p_i$.  

In this paper, we obtain bounds for the rate of convergence of a class of statistics for complete block designs that includes the Friedman and Pearson statistics and the power divergence family of statistics, at least for certain values of the index parameter $\lambda$.    By building on the proof techniques of \cite{gaunt normal} and \cite{gaunt chi square}, we establish some simple conditions under which the rate of convergence is of order $n^{-1}$ for smooth test functions, and present the general $O(n^{-1})$ bounds in Theorems \ref{thmsec2}, \ref{thmsec21} and \ref{thmsec3}.  In Section 3, we consider the application of these general bounds to particular chi-square statistics.  In particular, in Theorem \ref{thm1}, we obtain an explicit $O(n^{-1})$ bound on the distributional distance between Friedman's statistic and its limiting chi-square distribution, for smooth test functions.  In Theorem \ref{thmpd}, we also show that the rate of convergence of the power divergence family of statistics is $O(n^{-1})$ for the cases that the index parameter $\lambda$ is either a positive integer or any real number greater than 5.  It is conjectured that this rate holds for any $\lambda\in\mathbb{R}$.

\subsection{Elements of Stein's method for functions of multivariate normal random variables}

To derive our approximation theorems for Friedman's statistic, we employ the powerful probabilistic technique Stein's method.  Originally developed for normal approximation by \cite{stein}, the method has since been extended to many other distributions, such as the multinomial \cite{loh}, exponential \cite{chatterjee, pekoz1}, gamma \cite{gaunt chi square, luk, nourdin1}, variance-gamma \cite{gaunt} and multivariate normal \cite{barbour2, gotze}.  For a comprehensive overview of the current literature and an outline of the basic method see \cite{ley}.  We now outline how Stein's method can be used to prove approximation theorems when the limit distribution can be represented as a function of multivariate normal random variables (for more details see \cite{gaunt normal}). We describe the general approach and explain how it can be applied to statistics and for block designs, such as Friedman's statistic.

Let $g:\mathbb{R}^d\rightarrow\mathbb{R}$ be continuous and let $\mathbf{Z}$ denote the standard $d$-dimensional multivariate normal distribution.  Let $\Sigma$ be non-negative definite, and $\Sigma^{1/2}$ be the unique non-negative matrix so that $\Sigma^{1/2}\mathbf{Z}\sim \mathrm{MVN}(\mathbf{0},\Sigma)$.  Suppose that we are interested in bounding the distributional distance between $g(\mathbf{W})$ and $g(\Sigma^{1/2}\mathbf{Z})$, where $\mathbf{W}\stackrel{D}{\rightarrow}\Sigma^{1/2}\mathbf{Z}$.  To see, for example, that Friedman's statistic falls into this framework, note that $F_r$ can be written in the form $g(\mathbf{W})$, where $g(\mathbf{w})=\sum_{j=1}^rw_j^2$ and the $W_i$ are asymptotically normally distributed by the central limit theorem.  Now, consider the multivariate normal Stein equation (see \cite{goldstein1}) with test function $h(g(\cdot))$:
\begin{equation} \label{mvng} \nabla^T\Sigma\nabla f(\mathbf{w})-\mathbf{w}^T\nabla f(\mathbf{w})=h(g(\mathbf{w}))-\mathbb{E}h(g(\Sigma^{1/2}\mathbf{Z})).
\end{equation} 
We can therefore bound the quantity of interest $|\mathbb{E}h(g(\mathbf{W}))-\mathbb{E}h(g(\Sigma^{1/2}\mathbf{Z}))|$ by solving (\ref{mvng}) for $f$ and then bounding the expectation 
\begin{equation}\label{emvn}\mathbb{E}[\nabla^T\Sigma\nabla f(\mathbf{W})-\mathbf{W}^T\nabla f(\mathbf{W})].
\end{equation}
A number of coupling techniques have been developed for bounding such expectations (see \cite{chatterjee 3, goldstein 2, goldstein1, meckes, reinert 1}).  These papers also give general plug-in bounds for this quantity, although these only hold for the classical case that the derivatives of the test function (here $h(g(\cdot))$) are bounded, in which standard bounds for the derivatives of the solution to (\ref{mvng}) can be applied (see \cite{gaunt rate, goldstein1, meckes}).   However, in general the derivatives of the test function $h(g(\cdot))$ will be unbounded (this is the case for Friedman's statistic) and therefore the derivatives of the solution 
\begin{equation}\label{mvnsolnh}f(\mathbf{w})=-\int_{0}^{\infty}[\mathbb{E}h(g(\mathrm{e}^{-s}\mathbf{w}+\sqrt{1-\mathrm{e}^{-2s}}\Sigma^{1/2}\mathbf{Z}))-\mathbb{E}h(g(\Sigma^{1/2}\mathbf{Z}))]\,\mathrm{d}s
\end{equation}
will also in general be unbounded.  The partial derivatives of the solution (\ref{mvnsolnh}) were bounded by \cite{gaunt normal} for a large class of function $g:\mathbb{R}^d\rightarrow\mathbb{R}$; in particular, bounds are given for the case that the partial derivatives of $g$ have polynomial growth.  These bounds are relevant to our study and are stated in Lemma \ref{cor28}.  With such bounds on the solution and the coupling strategies developed for multivariate normal approximation it is in principle possible to bound the expectation (\ref{emvn}), although we cannot directly apply the existing plug-in bounds.  This is the approach we shall take when obtaining our general approximation theorems in Section 2.

\subsection{Outline of the paper}

In Section 2, we derive general bounds for the distributional distance between statistics $g(\mathbf{W})$ for complete block designs and their limiting distribution $g(\Sigma^{1/2}\mathbf{Z})$.  We give two general $O(n^{-1/2})$ bounds, one for the case of non-negative covariance matrices (Theorem \ref{theoremsec21}) and another for positive definite covariance matrices (Theorem \ref{theoremsec22}).  When the function $g$ is even ($g(\mathbf{w})=g(-\mathbf{w})$ for all $w\in\mathbb{R}^d$), the rate of convergence can be improved to $O(n^{-1})$ for smooth test functions (see Theorem \ref{thmsec2}).  In Section 2.3, we see that it is possible to obtain $O(n^{-1})$ bounds when the assumption that $g$ is relaxed a little (see Theorem \ref{thmsec3}).  In Section 3, we consider the application of the general bounds of Section 2 to the Friedman and Pearson statistics, as well as the power divergence statistics.  In particular, in Theorem \ref{thm1}, we obtain an explicit $O(n^{-1})$ bound for the distributional distance between Friedman's statistics and its limiting chi-square distribution.  In Theorem \ref{thmpd}, we obtain a $O(n^{-1})$ bound on the rate of convergence for the family of power divergence statistics for the cases that $\lambda$ is a positive integer or any real number greater than 5.  We end by conjecturing that this rate holds for all $\lambda\in\mathbb{R}$.

\section{General bounds for the distributional distance between $g(\mathbf{W})$ and $g(\Sigma^{1/2}\mathbf{Z})$}

\subsection{Preliminary lemmas}

Let $X_{ij}$, $i=1,\ldots,n$, $j=1,\ldots,d$, be random variables which have mean zero, but which are not necessarily independent or identically distributed.  Indeed, we shall suppose that $X_{1,j},\ldots,X_{n,j}$ are independent for a fixed $j$, but that the random variables $X_{i,1},\ldots,X_{i,d}$ may be dependent for any fixed $i$.  For $j=1,\ldots,d$, let $W_j=\frac{1}{\sqrt{n}}\sum_{i=1}^nX_{ij}$ and denote $\mathbf{W}=(W_1,\ldots,W_d)^T$.  To deal with this dependence structure, we introduce the random variables $W_j^{(i)}=W_j-\frac{1}{\sqrt{n}}X_{ij}$, so that $W_j^{(i)}$ and $X_{ij}$ are independent.  We also write $\mathbf{W}^{(i)}=(W_1^{(i)},\ldots,W_d^{(i)})^T$.  Suppose that the covariance matrix $\Sigma$ of $\mathbf{W}$ is non-negative definite.  Let $\mathbf{Z}$ have the standard $d$-dimensional multivariate normal distribution, so that $\Sigma^{1/2}\mathbf{Z}\sim\mathrm{MVN}(\mathbf{0},\Sigma)$.  Let $\sigma_{jk}=(\Sigma)_{jk}$ and $Z_i=(\Sigma^{1/2}\mathbf{Z})_i\sim N(0,\sigma_{ii})$.  

In this section, we shall obtain bounds on the distributional distance between $g(\mathbf{W})$ and $g(\Sigma^{1/2}\mathbf{Z})$, where $g:\mathbb{R}^d\rightarrow\mathbb{R}$ is a sufficiently differentiable function.  Note that $g(\mathbf{W})$ takes the form of a statistic for complete block designs.   In this subsection, we give two bounds: one for general $g$ and a second for the case that $g$ is an even function ($g(\mathbf{w})=g(\mathbf{-w})$ for all $\mathbf{w}\in\mathbb{R}^d$).  In the next subsection, we shall specialise to the case that the partial derivatives of $g$ have polynomial growth.  Before presenting our bounds, we introduce some notation.  We shall let $C^k(I)$ denote the class of real-valued functions defined on $I\subseteq\mathbb{R}^d$ whose partial derivatives of order $k$ all exist.  We shall also let $C_b^k(I)$ denote the class of real-valued functions defined on $I\subseteq\mathbb{R}^d$ whose partial derivatives of order $k$ all exist and are bounded. 

\begin{lemma}\label{noteveng}Let $X_{ij}$, $i=1,\ldots,n$, $j=1,\ldots,d$, be defined as above.  Suppose $h$ and $g$ are such that $f\in C^{3}(\mathbb{R}^d)$, where $f$ is given by (\ref{mvnsolnh}).  Then, if the expectations on the right-hand side of (\ref{springz}) exist, 
\begin{align}&|\mathbb{E}h(g(\mathbf{W}))-\mathbb{E}h(g(\Sigma^{1/2}\mathbf{Z}))|\nonumber\\
&\leq\frac{1}{2n^{3/2}}\sum_{i=1}^n\sum_{j,k,l=1}^d\bigg\{\sup_{\theta}\mathbb{E}\bigg|X_{ij}X_{ik}X_{il}\frac{\partial^3f}{\partial w_j\partial w_k\partial w_l}(\mathbf{W}_{\theta}^{(i)})\bigg|\nonumber\\
\label{springz}&\quad+2|\mathbb{E}X_{ij}X_{ik}|\sup_{\theta}\mathbb{E}\bigg|X_{il}\frac{\partial^3f}{\partial w_j\partial w_k\partial w_l}(\mathbf{W}_{\theta}^{(i)})\bigg|\bigg\},
\end{align}
where $\mathbf{W}_{\theta}^{(i)}=\mathbf{W}^{(i)}+\frac{\theta}{\sqrt{n}}\mathbf{X}_{i}$ for some $\theta\in(0,1)$ and $\mathbf{X}_{i}=(X_{i,1},\ldots,X_{i,d})^T$.
\end{lemma}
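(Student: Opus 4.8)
The plan is to evaluate the multivariate normal Stein equation (\ref{mvng}) at $\mathbf{W}$ and take expectations, so that the left-hand side of (\ref{springz}) equals $|\mathbb{E}[\nabla^T\Sigma\nabla f(\mathbf{W})-\mathbf{W}^T\nabla f(\mathbf{W})]|$, and then to show that Taylor expanding the two summands produces a common leading term that cancels, leaving only third-order remainders. First I would treat the drift term. Substituting $W_j=\frac{1}{\sqrt n}\sum_{i=1}^nX_{ij}$ gives
\[\mathbb{E}[\mathbf{W}^T\nabla f(\mathbf{W})]=\frac{1}{\sqrt n}\sum_{i=1}^n\sum_{j=1}^d\mathbb{E}\Big[X_{ij}\tfrac{\partial f}{\partial w_j}(\mathbf{W})\Big],\]
and since $\mathbf{W}=\mathbf{W}^{(i)}+\frac{1}{\sqrt n}\mathbf{X}_i$ I would Taylor expand $\frac{\partial f}{\partial w_j}(\mathbf{W})$ about $\mathbf{W}^{(i)}$ to second order with Lagrange remainder (legitimate as $f\in C^{3}(\mathbb{R}^d)$). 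Because $\mathbf{W}^{(i)}$ is independent of $\mathbf{X}_i$ and $\mathbb{E}X_{ij}=0$, the zeroth-order term vanishes, the first-order term factorises as $\frac{1}{\sqrt n}\sum_{k=1}^d\mathbb{E}[X_{ij}X_{ik}]\,\mathbb{E}[\frac{\partial^2 f}{\partial w_j\partial w_k}(\mathbf{W}^{(i)})]$, and the residual second-order term, once summed over $i,j$, is exactly the first remainder appearing in (\ref{springz}).

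Next I would treat the diffusion term $\mathbb{E}[\nabla^T\Sigma\nabla f(\mathbf{W})]=\sum_{j,k=1}^d\sigma_{jk}\,\mathbb{E}[\frac{\partial^2 f}{\partial w_j\partial w_k}(\mathbf{W})]$. Since the trials are independent across $i$ and the $X_{ij}$ have mean zero, $\sigma_{jk}=\frac{1}{n}\sum_{i=1}^n\mathbb{E}[X_{ij}X_{ik}]$, so this term carries the same $\frac1n\sum_i\mathbb{E}[X_{ij}X_{ik}]$ prefactor as the drift term. A first-order Taylor expansion of $\frac{\partial^2 f}{\partial w_j\partial w_k}(\mathbf{W})$ about $\mathbf{W}^{(i)}$ then splits the diffusion term into the leading piece $\frac1n\sum_{i=1}^n\sum_{j,k=1}^d\mathbb{E}[X_{ij}X_{ik}]\,\mathbb{E}[\frac{\partial^2 f}{\partial w_j\partial w_k}(\mathbf{W}^{(i)})]$ together with a third-order remainder, and after pulling the deterministic factor $\mathbb{E}[X_{ij}X_{ik}]$ out of the expectation this remainder is bounded by the second term in (\ref{springz}).

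The crucial observation is that the leading piece just produced from the diffusion term coincides exactly with the leading piece extracted from the drift term, so the two cancel in the difference $\mathbb{E}[\nabla^T\Sigma\nabla f(\mathbf{W})-\mathbf{W}^T\nabla f(\mathbf{W})]$. Only the two third-order remainders survive; applying the triangle inequality, bounding each unknown intermediate point $\mathbf{W}^{(i)}_\theta$ by the supremum over $\theta\in(0,1)$, and passing the absolute values inside the expectations yields precisely (\ref{springz}). I expect the main obstacle to be organisational rather than conceptual: one must carry both expansions to exactly the right order, the drift term to second order and the diffusion term to first order, so that the $O(n^{-1})$ main terms are identical and cancel while the leftover terms collect cleanly at order $n^{-3/2}$. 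Justifying the interchange of the $\mathrm{d}s$-integral defining $f$ in (\ref{mvnsolnh}) with the expectations, and differentiation under that integral, is routine under the standing assumptions that $f\in C^{3}(\mathbb{R}^d)$ and that the expectations on the right-hand side of (\ref{springz}) exist.
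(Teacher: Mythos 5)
Your proposal is correct and follows essentially the same route as the paper: both pass to the Stein-equation expectation $\mathbb{E}[\nabla^T\Sigma\nabla f(\mathbf{W})-\mathbf{W}^T\nabla f(\mathbf{W})]$, Taylor expand the drift term about $\mathbf{W}^{(i)}$ using independence and $\mathbb{E}X_{ij}=0$, invoke $\sigma_{jk}=\frac{1}{n}\sum_i\mathbb{E}X_{ij}X_{ik}$, and identify the two remainders in (\ref{springz}). The only (immaterial) difference is the direction of the second expansion: you expand the diffusion term from $\mathbf{W}$ down to $\mathbf{W}^{(i)}$ so the leading pieces cancel at $\mathbf{W}^{(i)}$, whereas the paper expands the drift's leading piece from $\mathbf{W}^{(i)}$ back up to $\mathbf{W}$ so cancellation occurs at $\mathbf{W}$; either way the same remainder $R_2$ results.
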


\begin{proof}We aim to bound $\mathbb{E}h(g(\mathbf{W}))-\mathbb{E}h(g(\mathbf{Z}))$, and do so by bounding the quantity
\begin{equation*}\mathbb{E}[\nabla^T\Sigma\nabla f(\mathbf{W})-\mathbf{W}^T\nabla f(\mathbf{W})]=\mathbb{E}\bigg[\sum_{j,k=1}^d\sigma_{jk}\frac{\partial^2f}{\partial w_j\partial w_k}(\mathbf{W})-\sum_{j=1}^dW_j\frac{\partial f}{\partial w_j}(\mathbf{W})\bigg].
\end{equation*}
Taylor expanding $\frac{\partial f}{\partial w_j}(\mathbf{W})$ about $\mathbf{W}_j^{(i)}$ gives
\begin{align*}&\sum_{j=1}^d\mathbb{E}W_j\frac{\partial f}{\partial w_j}(\mathbf{W})=\frac{1}{\sqrt{n}}\sum_{i=1}^n\sum_{j=1}^d\mathbb{E}X_{ij}\frac{\partial f}{\partial w_j}(\mathbf{W}) \\
&=\frac{1}{\sqrt{n}}\sum_{i=1}^n\sum_{j=1}^d\mathbb{E}X_{ij}\frac{\partial f}{\partial w_j}(\mathbf{W}^{(i)})+\frac{1}{n}\sum_{i=1}^n\sum_{j,k=1}^d\mathbb{E}X_{ij}X_{ik}\frac{\partial^2f}{\partial w_j\partial w_k}(\mathbf{W}^{(i)})+R_1 \\
&=\frac{1}{\sqrt{n}}\sum_{i=1}^n\sum_{j=1}^d\mathbb{E}X_{ij}\mathbb{E}\frac{\partial f}{\partial w_j}(\mathbf{W}^{(i)})+\frac{1}{n}\sum_{i=1}^n\sum_{j,k=1}^d\mathbb{E}X_{ij}X_{ik}\mathbb{E}\frac{\partial^2f}{\partial w_j\partial w_k}(\mathbf{W}^{(i)})+R_1 \\
&=\frac{1}{n}\sum_{i=1}^n\sum_{j,k=1}^d\mathbb{E}X_{ij}X_{ik}\mathbb{E}\frac{\partial^2f}{\partial w_j\partial w_k}(\mathbf{W})+R_1+R_2\\
&=\sum_{j,k=1}^d\sigma_{jk}\mathbb{E}\frac{\partial^2f}{\partial w_j\partial w_k}(\mathbf{W})+R_1+R_2,
\end{align*}
where
\begin{align*}|R_1|&\leq\frac{1}{2n^{3/2}}\sum_{i=1}^n\sum_{j,k,l=1}^d\sup_{\theta}\mathbb{E}\bigg|X_{ij}X_{ik}X_{il}\frac{\partial^3f}{\partial w_j\partial w_k\partial w_l}(\mathbf{W}_{\theta}^{(i)})\bigg|, \\
|R_2|&\leq\frac{1}{n^{3/2}}\sum_{i=1}^n\sum_{j,k,l=1}^d|\mathbb{E}X_{ij}X_{ik}|\sup_{\theta}\mathbb{E}\bigg|X_{il}\frac{\partial^3f}{\partial w_j\partial w_k\partial w_l}(\mathbf{W}_{\theta}^{(i)})\bigg|.
\end{align*}
Here we used that $\frac{1}{n}\sum_{i=1}^n\mathbb{E}X_{ij}X_{ik}=\mathbb{E}W_jW_k=\sigma_{jk}$.  The proof is complete.
\end{proof}

\begin{remark}In the statement of Lemma \ref{noteveng}, we did not give precise conditions on $h$ and $g$ such that $f\in C_b^{3}(\mathbb{R}^d)$, nor restrictions on the $X_{ij}$ such that the expectations on the right-hand side of (\ref{springz}) exist.  In applying, Lemma \ref{noteveng} in practice (see Section 2.2), one would need to check that $h$, $g$ and the $X_{ij}$ are such that these conditions are met.  The same comment applies equally to Lemma \ref{evennormal}. 
\end{remark}

We now obtain an analogue of Lemma \ref{noteveng} for the case that $g$ is an even function.  The symmetry of the function $g$ allows us to obtain $O(n^{-1})$ convergence rates for smooth test functions $h$.  The following partial differential equation 
\begin{equation}\label{234multinor}\nabla^T\Sigma\nabla \psi_{jkl}(\mathbf{w})-\mathbf{w}^T\nabla \psi_{jkl}(\mathbf{w})=\frac{\partial^{3} f}{\partial w_j\partial w_k\partial w_l}(\mathbf{w})
\end{equation}
shall appear in our proof.

\begin{lemma}\label{evennormal}Let $X_{ij}$, $i=1,\ldots,n$, $j=1,\ldots,d$, be defined as they were for Lemma \ref{noteveng}. Suppose $g:\mathbb{R}^d\rightarrow\mathbb{R}$ is an even function.  Suppose further that the solution (\ref{mvnsolnh}), denoted by $f$, belongs to the class $C^{4}(\mathbb{R}^d)$ and that the solution $\psi_{jkl}$ to (\ref{234multinor}) is in the class $C^3(\mathbb{R}^d)$.  Then, if the expectations on the right-hand side of (\ref{dig hole}) exist,
\begin{align}&|\mathbb{E}h(g(\mathbf{W}))-\mathbb{E}h(g(\Sigma^{1/2}\mathbf{Z}))|\nonumber \\
&\leq\frac{1}{6n^2}\sum_{i=1}^n\sum_{j,k,l,t=1}^d\bigg\{\sup_{\theta}\mathbb{E}\bigg|X_{ij}X_{ik}X_{il}X_{it}\frac{\partial^4f}{\partial w_j\partial w_k\partial w_l\partial w_t}(\mathbf{W}_{\theta}^{(i)})\bigg|\nonumber\\
&+9|\mathbb{E}X_{ij}X_{ik}|\sup_{\theta}\mathbb{E}\bigg|X_{il}X_{it}\frac{\partial^4f}{\partial w_j\partial w_k\partial w_l\partial w_t}(\mathbf{W}_{\theta}^{(i)})\bigg|\nonumber \\
&+3|\mathbb{E}X_{ij}X_{ik}X_{il}|\sup_{\theta}\mathbb{E}\bigg|X_{it}\frac{\partial^4f}{\partial w_j\partial w_k\partial w_l\partial w_t}(\mathbf{W}_{\theta}^{(i)})\bigg|\bigg\}\nonumber \\
&+\frac{1}{4n^3}\sum_{i=1}^n\sum_{j,k,l=1}^d|\mathbb{E}X_{ij}X_{ik}X_{il}|\sum_{\alpha=1}^n\sum_{a,b,c=1}^d\bigg\{\sup_{\theta}\mathbb{E}\bigg|X_{\alpha a}X_{\alpha b}X_{\alpha c}\frac{\partial^3\psi_{jkl}}{\partial w_a\partial w_b\partial w_c}(\mathbf{W}_{\theta}^{(i)})\bigg| \nonumber\\
\label{dig hole}&+2|\mathbb{E}X_{\alpha a}X_{\alpha b}|\sup_{\theta}\mathbb{E}\bigg|X_{\alpha c}\frac{\partial^3\psi_{jkl}}{\partial w_a\partial w_b\partial w_c}(\mathbf{W}_{\theta}^{(i)})\bigg|\bigg\}.
\end{align}
\end{lemma}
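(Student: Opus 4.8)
The plan is to follow the proof of Lemma \ref{noteveng}, again bounding $|\mathbb{E}h(g(\mathbf{W}))-\mathbb{E}h(g(\Sigma^{1/2}\mathbf{Z}))|$ via the Stein expression $\mathbb{E}[\nabla^T\Sigma\nabla f(\mathbf{W})-\mathbf{W}^T\nabla f(\mathbf{W})]$ coming from (\ref{mvng}), but carrying the Taylor expansion one order further. Write $f_{j}$, $f_{jk}$, $f_{jkl}$, $f_{jklt}$ for the partial derivatives of $f$. First I would expand $\sum_j\mathbb{E}W_jf_j(\mathbf{W})=\frac{1}{\sqrt n}\sum_{i,j}\mathbb{E}X_{ij}f_j(\mathbf{W})$, Taylor expanding $f_j(\mathbf{W})$ about $\mathbf{W}^{(i)}$ to third order with fourth-order remainder (this is where $f\in C^4(\mathbb{R}^d)$ enters). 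Since $\mathbf{W}^{(i)}$ is independent of $\mathbf{X}_i$ and $\mathbb{E}X_{ij}=0$, the zeroth-order term vanishes identically and the second-order term factorises as $\frac1n\sum_i\sum_{j,k}\mathbb{E}X_{ij}X_{ik}\,\mathbb{E}f_{jk}(\mathbf{W}^{(i)})$.

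The second-order term is handled as in Lemma \ref{noteveng}: expanding $f_{jk}(\mathbf{W}^{(i)})$ back to $\mathbf{W}$ (the first-order correction again vanishing after factorisation) yields the leading sum $\sum_{j,k}\sigma_{jk}\mathbb{E}f_{jk}(\mathbf{W})$, which cancels the matching term of the Stein expression, together with an $O(n^{-1})$ remainder controlled by $\frac{1}{n^2}\sum_i\sum_{j,k,l,t}|\mathbb{E}X_{ij}X_{ik}|\sup_\theta\mathbb{E}|X_{il}X_{it}f_{jklt}(\mathbf{W}_\theta^{(i)})|$ (using $\frac1n\sum_i\mathbb{E}X_{ij}X_{ik}=\sigma_{jk}$). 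The fourth-order Taylor remainder gives the clean term $\frac{1}{6n^2}\sum_i\sum_{j,k,l,t}\sup_\theta\mathbb{E}|X_{ij}X_{ik}X_{il}X_{it}f_{jklt}(\mathbf{W}_\theta^{(i)})|$. What is new, and what blocks the $O(n^{-1})$ rate for general $g$, is the third-order term $T_2=\frac{1}{2n^{3/2}}\sum_i\sum_{j,k,l}\mathbb{E}X_{ij}X_{ik}X_{il}\,\mathbb{E}f_{jkl}(\mathbf{W}^{(i)})$, whose prefactor is only $O(n^{-1/2})$.

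The crux is to gain a further factor $n^{-1/2}$ from $T_2$ using the evenness of $g$. From the representation (\ref{mvnsolnh}), together with $\Sigma^{1/2}\mathbf{Z}\stackrel{D}{=}-\Sigma^{1/2}\mathbf{Z}$ and $g(\mathbf{w})=g(-\mathbf{w})$, one sees that $f$ is even; hence each $f_{jkl}$ is an odd function and, by the symmetry of $\Sigma^{1/2}\mathbf{Z}$, $\mathbb{E}f_{jkl}(\Sigma^{1/2}\mathbf{Z})=0$. This vanishing is exactly the centring condition that makes $f_{jkl}$ an admissible test function for the auxiliary Stein equation (\ref{234multinor}), whose solution $\psi_{jkl}$ is assumed to lie in $C^3(\mathbb{R}^d)$. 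After replacing $\mathbf{W}^{(i)}$ by $\mathbf{W}$ in $T_2$ (the first-order correction supplying the $O(n^{-1})$ term with factor $|\mathbb{E}X_{ij}X_{ik}X_{il}|$), I would write $\mathbb{E}f_{jkl}(\mathbf{W})=\mathbb{E}[\nabla^T\Sigma\nabla\psi_{jkl}(\mathbf{W})-\mathbf{W}^T\nabla\psi_{jkl}(\mathbf{W})]$ and bound the right-hand side by a direct application of Lemma \ref{noteveng}, now with test function $f_{jkl}$ and solution $\psi_{jkl}$. This shows $|\mathbb{E}f_{jkl}(\mathbf{W})|=O(n^{-1/2})$, with an explicit bound in the third derivatives of $\psi_{jkl}$; multiplying by the $O(n^{-1/2})$ prefactor of $T_2$ produces the $\frac{1}{4n^3}$ double sum in (\ref{dig hole}).

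Assembling the fourth-order remainder, the two second-order corrections generated by replacing $\mathbf{W}^{(i)}$ by $\mathbf{W}$ in the second- and third-order terms, and the $\psi_{jkl}$ contribution, and bounding each expectation by the triangle inequality, gives (\ref{dig hole}); the constants $9$ and $3$ come from aggregating the several remainder contributions of each index type. I expect the main obstacle to be the treatment of $T_2$: the key realisation is that evenness forces $\mathbb{E}f_{jkl}(\Sigma^{1/2}\mathbf{Z})=0$, so that a \emph{second} round of Stein's method through $\psi_{jkl}$ is available, and one must keep track of enough smoothness ($f\in C^4$, $\psi_{jkl}\in C^3$) and integrability for every expectation to be finite. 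The repeated factorisations using the independence of $\mathbf{X}_i$ and $\mathbf{W}^{(i)}$ and the vanishing of first moments are routine.
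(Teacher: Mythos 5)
Your proposal is correct and takes essentially the same route as the paper's proof: a one-order-higher Taylor expansion of the Stein expression, with the problematic third-order term handled by noting that evenness of $g$ forces $f$ to be even, hence $\mathbb{E}\frac{\partial^3 f}{\partial w_j\partial w_k\partial w_l}(\Sigma^{1/2}\mathbf{Z})=0$, so that $\frac{\partial^3 f}{\partial w_j\partial w_k\partial w_l}$ becomes an admissible test function for the auxiliary Stein equation (\ref{234multinor}) and a second application of Lemma \ref{noteveng} through $\psi_{jkl}$ supplies the extra factor $n^{-1/2}$, giving the $\frac{1}{4n^3}$ double sum. The only deviations are in the bookkeeping of the remainder terms (your quoted constant $\tfrac{1}{n^2}$ differs slightly from the paper's split into terms of sizes $\tfrac{1}{2n^2}$ and $\tfrac{1}{n^2}$, which aggregate to the factors $9$ and $3$), and any such accounting yields the stated bound or a sharper one.
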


\begin{proof}By a similar argument to the one used in the proof of Lemma \ref{noteveng}, 
\begin{align*}\sum_{j=1}^d\mathbb{E}W_j\frac{\partial f}{\partial w_j}(\mathbf{W})&=
\frac{1}{n}\sum_{i=1}^n\sum_{j,k=1}^d\mathbb{E}X_{ij}X_{ik}\mathbb{E}\frac{\partial^2f}{\partial w_j\partial w_k}(\mathbf{W}^{(i)})+N_1+R_1 \\
&=\frac{1}{n}\sum_{i=1}^n\sum_{j,k=1}^d\mathbb{E}X_{ij}X_{ik}\mathbb{E}\frac{\partial^2f}{\partial w_j\partial w_k}(\mathbf{W})+N_1+N_2+R_1 +R_2,
\end{align*}
where
\begin{align*}N_1&=\frac{1}{2n^{3/2}}\sum_{i=1}^n\sum_{j,k,l=1}^d\mathbb{E}X_{ij}X_{ik}X_{il}\mathbb{E}\frac{\partial^3f}{\partial w_j\partial w_k\partial w_l}(\mathbf{W}^{(i)}),\\
N_2&=-\frac{1}{n^{3/2}}\sum_{i=1}^n\sum_{j,k,l=1}^d\mathbb{E}X_{ij}X_{ik}\mathbb{E}X_{il}\frac{\partial^3f}{\partial w_j\partial w_k\partial w_l}(\mathbf{W}),\\
|R_1|&\leq \frac{1}{6n^2}\sum_{i=1}^n\sum_{j,k,l,t=1}^d\sup_{\theta}\mathbb{E}\bigg|X_{ij}X_{ik}X_{il}X_{it}\frac{\partial^4f}{\partial w_j\partial w_k\partial w_l\partial w_t}(\mathbf{W}_{\theta}^{(i)})\bigg|, \\
|R_2|&\leq \frac{1}{2n^2}\sum_{i=1}^n\sum_{j,k,l,t=1}^d|\mathbb{E}X_{ij}X_{ik}|\sup_{\theta}\mathbb{E}\bigg|X_{il}X_{it}\frac{\partial^4f}{\partial w_j\partial w_k\partial w_l\partial w_t}(\mathbf{W}_{\theta}^{(i)})\bigg|.
\end{align*}
We can write $N_1$ as
\begin{equation*}N_1=\frac{1}{2n^{3/2}}\sum_{i=1}^n\sum_{j,k,l=1}^d\mathbb{E}X_{ij}X_{ik}X_{il}\mathbb{E}\frac{\partial^3f}{\partial w_j\partial w_k\partial w_l}(\mathbf{W})+R_3,
\end{equation*}
where
\begin{equation*}|R_3|\leq \frac{1}{2n^2}\sum_{i=1}^n\sum_{j,k,l,t=1}^d|\mathbb{E}X_{ij}X_{ik}X_{il}|\sup_{\theta}\mathbb{E}\bigg|X_{it}\frac{\partial^4f}{\partial w_j\partial w_k\partial w_l\partial w_t}(\mathbf{W}_{\theta}^{(i)})\bigg|,
\end{equation*}
and we can also write $N_2$ as
\begin{equation*}N_2=-\frac{1}{n^{3/2}}\sum_{i=1}^n\sum_{j,k,l=1}^d\mathbb{E}X_{ij}X_{ik}\mathbb{E}X_{il}\mathbb{E}\frac{\partial^3f}{\partial w_j\partial w_k\partial w_l}(\mathbf{W}^{(i)})+R_4=R_4,
\end{equation*}
where
\begin{equation*}|R_4|\leq\frac{1}{n^2}\sum_{i=1}^n\sum_{j,k,l,t=1}^d|\mathbb{E}X_{ij}X_{ik}|\sup_{\theta}\mathbb{E}\bigg|X_{il}X_{it}\frac{\partial^4f}{\partial w_j\partial w_k\partial w_l\partial w_t}(\mathbf{W}_{\theta}^{(i)})\bigg|.
\end{equation*}
Combining bounds gives that
\begin{align}&|\mathbb{E}h(g(\mathbf{W}))-\mathbb{E}h(g(\Sigma^{1/2}\mathbf{Z}))|\nonumber\\
&\leq \frac{1}{2n^{3/2}}\sum_{i=1}^n\sum_{j,k,l=1}^d|\mathbb{E}X_{ij}X_{ik}X_{il}|\bigg|\mathbb{E}\frac{\partial^3f}{\partial w_j\partial w_k\partial w_l}(\mathbf{W})\bigg|\nonumber\\
\label{nearlyeven}&\quad+|R_1|+|R_2|+|R_3|+|R_4|.
\end{align}
To achieve the desired $O(n^{-1})$ bound we need to show that $\mathbb{E}\frac{\partial^{3} f}{\partial w_j\partial w_k\partial w_l}(\mathbf{W})$ is of order $n^{-1/2}$, since in general $\mathbb{E}X_{ij}X_{ik}X_{il}\not=0$.  We consider the $\mathrm{MVN}(\mathbf{0},\Sigma)$ Stein equation with test function $\frac{\partial^{3} f}{\partial w_j\partial w_k\partial w_l}$:
\begin{equation*}\nabla^T\Sigma\nabla \psi_{jkl}(\mathbf{w})-\mathbf{w}^T\nabla \psi_{jkl}(\mathbf{w})=\frac{\partial^{3} f}{\partial w_j\partial w_k\partial w_l}(\mathbf{w})-\mathbb{E}\bigg[\frac{\partial^{3} f}{\partial w_j\partial w_k\partial w_l}(\Sigma^{1/2}\mathbf{Z})\bigg].
\end{equation*}
Since $g$ is an even function, the solution $f$, as given by (\ref{mvnsolnh}), is an even function (see \cite{gaunt normal}, Lemma 3.2).  Therefore $\mathbb{E}\frac{\partial^{3} f}{\partial w_j\partial w_k\partial w_l}(\Sigma^{1/2}\mathbf{Z})=0$, and so
\begin{equation}\label{mpat}\mathbb{E}\bigg[\frac{\partial^{3} f}{\partial w_j\partial w_k\partial w_l}(\mathbf{W})\bigg]=\mathbb{E}[\nabla^T\Sigma\nabla \psi_{jkl}(\mathbf{W})-\mathbf{W}^T\nabla \psi_{jkl}(\mathbf{W})].
\end{equation}
We can use Lemma \ref{noteveng} to bound the right-hand side of (\ref{mpat}), which allows us to obtain a $O(n^{-1/2})$ bound for this quantity.  All terms have now been bounded to the desired order and the proof is complete.
\end{proof}

\subsection{Approximation theorems for polynomial $P$}

Lemmas \ref{noteveng} and \ref{evennormal} allow one to bound the distributional distance between $g(\mathbf{W})$ and $g(\Sigma^{1/2}\mathbf{Z})$ if bounds are available for the expectations on the right-hand side of (\ref{springz}) and (\ref{dig hole}), respectively.  In this subsection, we obtain such bounds for the case that the partial derivatives of $g$ have polynomial growth.  We begin, with Lemma \ref{cor28} (below), in which we state some bounds (see \cite{gaunt normal}, Corollary 2.2 and 2.3) for the solutions $f$ and $\psi_{jkl}$.  In \cite{gaunt normal} bounds for $f$ and $\psi_{jkl}$ are also available for the case that the partial derivatives of $g$ have exponential growth, although for space reasons we do not include these bounds (polynomial bounds suffice for our applications).  

We say that the function $g:\mathbb{R}^d\rightarrow\mathbb{R}$ belongs to the class $C_{P}^m(\mathbb{R}^d)$ if all $m$-th order partial derivatives of $g$ exist and there exists a dominating function $P:\mathbb{R}^d\rightarrow\mathbb{R}^+$ such that, for all $\mathbf{w}\in\mathbb{R}^d$, the partial derivatives satisfy
\begin{equation*}\bigg|\frac{\partial^kg(\mathbf{w})}{\prod_{j=1}^k\partial w_{i_j}}\bigg|^{m/k}\leq P(\mathbf{w}):=A+\sum_{i=1}^dB_i|w_i|^{r_i}, \quad k=1,\ldots,m,
\end{equation*}
where $A\geq 0$, $B_1,\ldots,B_d\geq 0$ and $r_1,\ldots,r_d\geq 0$.  We shall write $h_m=\sum_{j=1}^m{m\brace j}\|h^{(j)}\|$, where $\|h\|=\|h\|_\infty=\sup_{w\in\mathbb{R}}|h(w)|$ and the Stirling numbers of the second kind are given by ${m\brace j}=\frac{1}{j!}\sum_{i=0}^j(-1)^{j-i}\binom{j}{i}i^m$ (see \cite{olver}).

\begin{lemma}\label{cor28}Suppose $\Sigma$ is positive definite and $h\in C_b^{m-1}(\mathbb{R})$ and $g\in C_P^{m-1}(\mathbb{R}^d)$ for $m\geq 2$.  Let $Z_i=(\Sigma^{1/2}\mathbf{Z})_i\sim N(0,\sigma_{ii})$.  Then, for all $\mathbf{w}\in\mathbb{R}^d$,
\begin{align}
\bigg|\frac{\partial^{m}f(\mathbf{w})}{\prod_{j=1}^{m}\partial w_{i_j}}\bigg|&\leq h_{m-1}\min_{1\leq l\leq d}
\bigg[A\mathbb{E}|(\Sigma^{-1/2}\mathbf{Z})_l|+\sum_{i=1}^d2^{r_i}B_i\big(|w_i|^{r_i}\mathbb{E}|(\Sigma^{-1/2}\mathbf{Z})_l|\nonumber\\
\label{sigmaiii1}&\quad+\mathbb{E}|(\Sigma^{-1/2}\mathbf{Z})_lZ_i^{r_i}|\big)\bigg].
\end{align}
Suppose now that $\Sigma$ is non-negative definite and $h\in C_b^m(\mathbb{R})$ and $g\in C_P^m(\mathbb{R}^d)$ for $m\geq 1$.  Then, for all $\mathbf{w}\in\mathbb{R}^d$,
\begin{align}\label{sigmaiii}\bigg|\frac{\partial^mf(\mathbf{w})}{\prod_{j=1}^m\partial w_{i_j}}\bigg|&\leq\frac{h_m}{m}\bigg[A+\sum_{i=1}^d2^{r_i}B_i\big(|w_i|^{r_i}+\mathbb{E}|Z_i|^{r_i}\big)\bigg].
\end{align}
Let $h\in C_b^{6}(\mathbb{R})$ and $g\in C_P^{6}(\mathbb{R}^d)$.  Then, for all $\mathbf{w}\in\mathbb{R}^d$,
\begin{align}\label{sigmaiii2}\bigg|\frac{\partial^3\psi_{jkl}(\mathbf{w})}{\partial w_a\partial w_b\partial w_c}\bigg|&\leq\frac{h_{6}}{18}\bigg[A+\sum_{i=1}^d3^{r_i}B_i\big(|w_i|^{r_i}+2\mathbb{E}|Z_i|^{r_i}\big)\bigg].
\end{align}
\end{lemma}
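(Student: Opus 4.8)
I would handle the three displays separately. The bounds (\ref{sigmaiii1}) and (\ref{sigmaiii}) are exactly Corollaries~2.2 and 2.3 of \cite{gaunt normal}, so there is nothing new to do beyond quoting them. For completeness I recall the mechanism, since I will reuse it: one differentiates the integral representation (\ref{mvnsolnh}) under the integral sign, so that an $m$-th order derivative of $f$ produces a factor $\mathrm{e}^{-ms}$ together with the $m$-th order derivative of the composition $h\circ g$ evaluated along the Ornstein--Uhlenbeck flow $\mathbf{w}\mapsto\mathrm{e}^{-s}\mathbf{w}+\sqrt{1-\mathrm{e}^{-2s}}\Sigma^{1/2}\mathbf{Z}$. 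The multivariate Fa\`{a} di Bruno formula expresses this derivative as a sum of products of derivatives of $h$ and of $g$, the former controlled by the quantities assembled in $h_m$ (whence the Stirling numbers) and the latter by the dominating polynomial $P$ via $g\in C_P^m(\mathbb{R}^d)$; one then transports $P$ through the flow and performs the elementary integral $\int_0^\infty\mathrm{e}^{-ms}\,\mathrm{d}s=1/m$. In particular this gives the raw pointwise estimate $|\partial^m(h\circ g)(\mathbf{w})|\le h_m P(\mathbf{w})$, which I will need below.

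For (\ref{sigmaiii2}) the starting point is that $\psi_{jkl}$ solves the $\mathrm{MVN}(\mathbf{0},\Sigma)$ Stein equation (\ref{234multinor}) with right-hand side $\partial^3 f/\partial w_j\partial w_k\partial w_l$, so it admits the analogue of (\ref{mvnsolnh}) with $h\circ g$ replaced by this third derivative (the centring constant subtracted in the representation is annihilated by the differentiations that follow, hence irrelevant to a bound on third derivatives of $\psi_{jkl}$). Differentiating three times in $\mathbf{w}$ turns the integrand into a sixth-order derivative of $f$ and produces a factor $\mathrm{e}^{-3s}$. The crucial choice is what to insert for this sixth derivative: rather than quote (\ref{sigmaiii}) as a black box, I would substitute the integral representation of $\partial^6 f$ itself, obtaining a \emph{double} integral over $(s_1,s_2)$ with weight $\mathrm{e}^{-6s_1}\mathrm{e}^{-3s_2}$ whose integrand is $\partial^6(h\circ g)$ evaluated at $\mathbf{u}(s_1,\mathbf{v}(s_2,\mathbf{w}))$, where each of $\mathbf{u}(s_1,\cdot)$ and $\mathbf{v}(s_2,\cdot)$ is one Ornstein--Uhlenbeck step. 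By the semigroup property the composition $\mathbf{u}(s_1,\mathbf{v}(s_2,\mathbf{w}))$ has the same law as a single Ornstein--Uhlenbeck step of time $s_1+s_2$ applied to $\mathbf{w}$, so the polynomial $P$ is transported only \emph{once}.

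Bounding $|\partial^6(h\circ g)|\le h_6\,P$ by the estimate recalled above, replacing the transported argument by $|w_i|+|Z_i|$ (legitimate since $\mathrm{e}^{-(s_1+s_2)}\le 1$ and $\sqrt{1-\mathrm{e}^{-2(s_1+s_2)}}\le 1$), and using $\mathbb{E}(|w_i|+|Z_i|)^{r_i}\le 3^{r_i}\big(|w_i|^{r_i}+2\mathbb{E}|Z_i|^{r_i}\big)$, I would finish by evaluating $\int_0^\infty\mathrm{e}^{-6s_1}\,\mathrm{d}s_1\int_0^\infty\mathrm{e}^{-3s_2}\,\mathrm{d}s_2=\tfrac16\cdot\tfrac13=\tfrac{1}{18}$, which produces the prefactor $h_6/18$ and the stated polynomial. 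The one genuine subtlety, and the step I would be most careful about, is precisely this single-versus-double transport: iterating the already-transported bound (\ref{sigmaiii}) for $\partial^6 f$ and then transporting a second time compounds the growth factor to $2^{2r_i-1}$, which exceeds $3^{r_i}$ as soon as $r_i\ge 3$ and so is too lossy to give (\ref{sigmaiii2}); collapsing the two steps into a single flow via the semigroup property is what keeps the exponent base at $3$. The routine justifications---differentiation under the integral sign and finiteness of every expectation---follow from the polynomial growth of the derivatives of $g$ together with the Gaussian tails of $\Sigma^{1/2}\mathbf{Z}$.
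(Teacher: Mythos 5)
Your proposal is correct, and it is worth noting that the paper itself contains no proof of this lemma: all three bounds are imported from Corollaries 2.2 and 2.3 of \cite{gaunt normal}, so for (\ref{sigmaiii1}) and (\ref{sigmaiii}) your citation is precisely what the paper does, and the only substantive content to check is your derivation of (\ref{sigmaiii2}). That derivation is sound: the centring constant in the solution of (\ref{234multinor}) is killed by differentiation; substituting the integral representation of $\partial^6 f$ into the thrice-differentiated representation of $\psi_{jkl}$ produces the weight $\mathrm{e}^{-3s}\mathrm{e}^{-6t}$; the Fa\`{a} di Bruno estimate gives $|\partial^6(h\circ g)|\leq h_6P$; and $\int_0^\infty\mathrm{e}^{-3s}\,\mathrm{d}s\int_0^\infty\mathrm{e}^{-6t}\,\mathrm{d}t=\frac{1}{18}$ yields the stated prefactor. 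One correction of emphasis, however: the semigroup collapse is not what ``keeps the exponent base at $3$''; it is an optional refinement. If one leaves the composed argument as the three-term sum $\mathrm{e}^{-(s+t)}\mathbf{w}+\mathrm{e}^{-t}\sqrt{1-\mathrm{e}^{-2s}}\Sigma^{1/2}\mathbf{Z}+\sqrt{1-\mathrm{e}^{-2t}}\Sigma^{1/2}\mathbf{Z}'$ and applies $|a+b+c|^{r}\leq 3^{r}(|a|^{r}+|b|^{r}+|c|^{r})$ coordinatewise, one lands exactly on $3^{r_i}\big(|w_i|^{r_i}+2\mathbb{E}|Z_i|^{r_i}\big)$ --- this is visibly where the shape of (\ref{sigmaiii2}) comes from, and is presumably the route of the cited corollary. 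Your semigroup argument instead gives the strictly stronger $2^{r_i}\big(|w_i|^{r_i}+\mathbb{E}|Z_i|^{r_i}\big)$, which you then correctly relax to the stated form. What you rightly identify as fatal is iterating the already-integrated bound (\ref{sigmaiii}) with $m=6$ and transporting a second time: that compounds to $4^{r_i}$ on $|w_i|^{r_i}$ (not $2^{2r_i-1}$ as you write, though this is immaterial to your conclusion), which exceeds $3^{r_i}$ for every $r_i>0$ and so cannot recover (\ref{sigmaiii2}).
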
 

\begin{lemma}\label{cbwbshc} Let $\mathbf{X}_i$ denote the vector $(X_{i,1},\ldots,X_{i,d})^T$ and let $u:\mathbb{R}^d\rightarrow\mathbb{R}^+$ be such that $\mathbb{E}|X_{ij}^{r_j}u(\mathbf{X}_i)|<\infty$ for all $i=1,\ldots,n$ and $j=1,\ldots,d$.  Then, for all $\theta\in(0,1)$,
\begin{align}\mathbb{E}\bigg|u(\mathbf{X}_i)\frac{\partial^mf}{\prod_{j=1}^{m}\partial w_{i_j}}(\mathbf{W}_{\theta}^{(i)})\bigg| &\leq \frac{h_m}{m}\bigg[A\mathbb{E}u(\mathbf{X}_i)+\sum_{j=1}^d2^{r_j}B_j\bigg(2^{r_j}\mathbb{E}u(\mathbf{X}_i)\mathbb{E}|W_j|^{r_j} \nonumber \\
\label{cbhsxx}&\quad+\frac{2^{r_j}\mathbb{E}|X_{ij}^{r_j}u(\mathbf{X}_i)|}{n^{r_j/2}}+\mathbb{E}|Z_j|^{r_j}\mathbb{E}u(\mathbf{X}_i)\bigg)\bigg], \\
\mathbb{E}\bigg|u(\mathbf{X}_i)\frac{\partial^mf}{\prod_{j=1}^{m}\partial w_{i_j}}(\mathbf{W}_{\theta}^{(i)})\bigg| &\leq h_{m-1}\min_{1\leq l\leq d}\mathbb{E}|(\Sigma^{-1/2}\mathbf{Z})_l|\bigg[A\mathbb{E}u(\mathbf{X}_i)\nonumber\\
&\quad+\sum_{j=1}^d2^{r_j}B_j\bigg(2^{r_j}\mathbb{E}u(\mathbf{X}_i)\mathbb{E}|W_j|^{r_j} \nonumber \\
&\quad+\frac{2^{r_j}\mathbb{E}|X_{ij}^{r_j}u(\mathbf{X}_i)|}{n^{r_j/2}}+\frac{\mathbb{E}|(\Sigma^{-1/2}\mathbf{Z})_lZ_j^{r_j}|}{\mathbb{E}|(\Sigma^{-1/2}\mathbf{Z})_l|}\mathbb{E}u(\mathbf{X}_i)\bigg)\bigg],\nonumber \\
\mathbb{E}\bigg|u(\mathbf{X}_i)\frac{\partial^3\psi_{abc}}{\partial w_j\partial w_k\partial w_l}(\mathbf{W}_{\theta}^{(i)})\bigg| &\leq \frac{h_{6}}{18}\bigg[A\mathbb{E}u(\mathbf{X}_i)+\sum_{j=1}^d3^{r_j}B_j\bigg(2^{r_j}\mathbb{E}u(\mathbf{X}_i)\mathbb{E}|W_j|^{r_j} \nonumber \\
&\quad+\frac{2^{r_j}\mathbb{E}|X_{ij}^{r_j}u(\mathbf{X}_i)}{n^{r_j/2}}|+2\mathbb{E}|Z_j|^{r_j+1}\mathbb{E}u(\mathbf{X}_i)\bigg)\bigg], \nonumber
\end{align}
where the inequalities are for $g$ in the classes $C_P^k(\mathbb{R}^d)$, $C_P^{k-1}(\mathbb{R}^d)$ and $C_P^{6}(\mathbb{R}^d)$, respectively.  For the second inequality, we must assume that $\Sigma$ is positive definite; for the other inequalities it suffices for $\Sigma$ to be non-negative definite.
\end{lemma}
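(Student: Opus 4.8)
The three inequalities share the same structure and differ only in which pointwise derivative bound from Lemma~\ref{cor28} is used as input, so the plan is to prove them in parallel. In each case I would start from the relevant bound in Lemma~\ref{cor28}---inequality (\ref{sigmaiii}) for the first, (\ref{sigmaiii1}) for the second and (\ref{sigmaiii2}) for the third---evaluated at the random point $\mathbf{w}=\mathbf{W}_\theta^{(i)}$, and then average against $u(\mathbf{X}_i)$. Because $u\ge 0$ and every term on the right-hand side of these pointwise bounds is non-negative, I may multiply through by $u(\mathbf{X}_i)$ and take expectations, interchanging $\mathbb{E}$ with the finite sums by linearity; there is no loss from the modulus since the pointwise bound already controls it.

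The deterministic pieces are immediate: the constant $A$ and the Gaussian moments ($\mathbb{E}|Z_j|^{r_j}$ for the first and third bounds, and $\mathbb{E}|(\Sigma^{-1/2}\mathbf{Z})_l Z_j^{r_j}|$ for the second) do not depend on the data, so after multiplication by $u(\mathbf{X}_i)$ they contribute precisely the $A\,\mathbb{E}u(\mathbf{X}_i)$ and $\mathbb{E}|Z_j|^{r_j}\,\mathbb{E}u(\mathbf{X}_i)$ terms appearing in the statement, with the prefactors $\tfrac{h_m}{m}$, $h_{m-1}\min_l\mathbb{E}|(\Sigma^{-1/2}\mathbf{Z})_l|$ and $\tfrac{h_6}{18}$ carried through essentially unchanged (for the second bound the $l$-dependent Gaussian moment stays inside the minimum). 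The only genuinely stochastic contribution comes from the growth term $|(\mathbf{W}_\theta^{(i)})_j|^{r_j}$, and controlling $\mathbb{E}\bigl[u(\mathbf{X}_i)\,|(\mathbf{W}_\theta^{(i)})_j|^{r_j}\bigr]$ is the heart of the argument.

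For this term I would write the $j$-th coordinate of $\mathbf{W}_\theta^{(i)}$ as $W_j^{(i)}+\tfrac{\theta}{\sqrt n}X_{ij}$ and apply the elementary inequality $|a+b|^{r_j}\le 2^{r_j}(|a|^{r_j}+|b|^{r_j})$ together with $\theta\in(0,1)$, giving the bound $2^{r_j}\bigl(|W_j^{(i)}|^{r_j}+n^{-r_j/2}|X_{ij}|^{r_j}\bigr)$; this is the source of the extra factor $2^{r_j}$ and of the $n^{-r_j/2}$ in the statement. The decisive structural point is then the independence of $u(\mathbf{X}_i)$ and $W_j^{(i)}$: by construction $W_j^{(i)}$ is a function of $\{X_{kl}:k\ne i\}$ only, while $u(\mathbf{X}_i)$ depends on row $i$ alone, so $\mathbb{E}\bigl[u(\mathbf{X}_i)\,|W_j^{(i)}|^{r_j}\bigr]=\mathbb{E}u(\mathbf{X}_i)\,\mathbb{E}|W_j^{(i)}|^{r_j}$, whereas the second term, in which both factors depend on $\mathbf{X}_i$, stays together as $n^{-r_j/2}\mathbb{E}|X_{ij}^{r_j}u(\mathbf{X}_i)|$.

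The step I expect to be the main obstacle is replacing $\mathbb{E}|W_j^{(i)}|^{r_j}$ by $\mathbb{E}|W_j|^{r_j}$, since the stated bound is phrased in terms of the full normalised sum $W_j$. Here I would use that $W_j^{(i)}=\mathbb{E}[W_j\mid\mathcal{F}^{(i)}]$, where $\mathcal{F}^{(i)}=\sigma(X_{kl}:k\ne i)$, which holds because $X_{ij}$ is mean-zero and independent of $\mathcal{F}^{(i)}$; the conditional Jensen inequality for the convex map $x\mapsto|x|^{r_j}$ (with $r_j\ge1$, as in the intended applications) then yields $\mathbb{E}|W_j^{(i)}|^{r_j}\le\mathbb{E}|W_j|^{r_j}$. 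Collecting the three pieces reproduces the right-hand sides. Positive definiteness of $\Sigma$ is required only for the second inequality, as that is the only one drawing on (\ref{sigmaiii1}) and hence on $\Sigma^{-1/2}$; the first and third rest on (\ref{sigmaiii}) and (\ref{sigmaiii2}), for which non-negative definiteness suffices.
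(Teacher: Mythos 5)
Your proposal is correct and follows essentially the same route as the paper's proof: apply the pointwise bounds of Lemma \ref{cor28} at $\mathbf{W}_\theta^{(i)}$, multiply by $u(\mathbf{X}_i)$ and take expectations, use $|a+b|^{s}\leq 2^{s}(|a|^{s}+|b|^{s})$ together with the independence of $u(\mathbf{X}_i)$ and $W_j^{(i)}$, and finish with the conditional Jensen argument giving $\mathbb{E}|W_j^{(i)}|^{r_j}\leq\mathbb{E}|W_j|^{r_j}$. Your explicit remark that the Jensen step requires convexity of $x\mapsto|x|^{r_j}$ (i.e.\ $r_j\geq1$) is a point of care the paper leaves implicit, but it does not change the argument.
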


\begin{proof}Let us prove the first inequality.  From inequality (\ref{sigmaiii}) we have
\begin{align*}\mathbb{E}\bigg|u(\mathbf{X}_i)\frac{\partial^mf}{\prod_{j=1}^{m}\partial w_{i_j}}(\mathbf{W}_{\theta}^{(i)})\bigg| &\leq \frac{h_m}{m}\bigg[A\mathbb{E}u(\mathbf{X}_i)+\sum_{j=1}^d2^{r_j}B_j\Big(\mathbb{E}|u(\mathbf{X}_i)(W_{j,\theta}^{(i)})^{r_j}|\\
&\quad+\mathbb{E}|Z|^{r_j}\mathbb{E}u(\mathbf{X}_i)\Big)\bigg],
\end{align*}
where $W_{j,\theta}^{(i)}$ is the $j$-th component of $\mathbf{W}_{\theta}^{(i)}$.   By using the crude inequality $|a+b|^s\leq 2^s(|a|^s+|b|^s)$, which holds for any $s\geq0$, and independence of $X_{ij}$ and $W_j^{(i)}$, we have 
\begin{align}\mathbb{E}|u(\mathbf{X}_i)(W_{j,\theta}^{(i)})^{r_j}|&\leq 2^{r_j}\mathbb{E}\bigg|u(\mathbf{X}_i)\bigg(|W_j^{(i)}|^{r_j}+\frac{\theta^{r_j}}{n^{r_j/2}}|X_{ij}|^{r_j}\bigg)\bigg| \nonumber\\
\label{foxsell}&\leq 2^{r_j}\bigg(\mathbb{E}u(\mathbf{X}_i)\mathbb{E}|W_j^{(i)}|^{r_j}+\frac{1}{n^{r_j/2}}\mathbb{E}|X_{ij}^{r_j}u(\mathbf{X}_i)|\bigg),
\end{align}
Using that $\mathbb{E}|W_j^{(i)}|^{r_j}\leq \mathbb{E}|W_j|^{r_j}$ leads to the desired inequality.  This can be seen by using Jensen's inequality: 
\begin{align*}\mathbb{E}|W_j|^{r_j}&=\mathbb{E}[\mathbb{E}[|W_j^{(i)}+n^{-1/2}X_{ij}|^{r_j} \: | \: W_j^{(i)}]] \\ 
&\geq\mathbb{E}|\mathbb{E}[W_j^{(i)}+n^{-1/2}X_{ij} \: | \: W_{j}^{(i)}]|^{r_j} 
=\mathbb{E}|W_j^{(i)}|^{r_j}.
\end{align*}
Thus we obtain the first inequality.  The proofs of the other two inequalities are similar; we just use inequalities (\ref{sigmaiii1}) and (\ref{sigmaiii2}) instead of inequality (\ref{sigmaiii}).
\end{proof}

By applying the inequalities of Lemma \ref{cbwbshc} to the bounds of Lemmas \ref{noteveng} and \ref{evennormal}, we can obtain the following four theorems for the distributional distance between $g(\mathbf{W})$ and $g(\Sigma^{1/2}\mathbf{Z})$ when the derivatives of $g$ have polynomial growth.  Theorem \ref{theoremsec21} follows from using inequality (\ref{cbhsxx}) in the bound of Lemma \ref{noteveng}, and the other theorems are proved similarly.  Theorems \ref{thmsec2} and \ref{thmsec21} give some simple sufficient conditions under which a $O(n^{-1})$ bound can be obtained for smooth test functions.  We could obtain analogues of Theorems \ref{thmsec2} and \ref{thmsec21} for the case of a positive definite covariance matrix $\Sigma$ (which would impose weaker conditions on $g$ and $h$) by appealing to results from Section 2 of \cite{gaunt normal}.  However, for space reasons, we do not present them (our applications involve covariance matrices that are only non-negative definite).

\begin{theorem}\label{theoremsec21}Let $X_{ij}$, $i=1,\ldots,n$, $j=1,\ldots,d$, be defined as in Lemma \ref{noteveng}, but with the additional assumption that $\mathbb{E}|X_{ij}|^{r_k+3}<\infty$ for all $i$, $j$ and $1\leq k\leq d$.  Suppose $\Sigma$ is non-negative definite and that $g\in C_P^3(\mathbb{R}^d)$.  Let $Z_i=(\Sigma^{1/2}\mathbf{Z})_i\sim N(0,\sigma_{ii})$.  Then, for $h\in C_b^3(\mathbb{R})$, 
\begin{align*}&|\mathbb{E}h(g(\mathbf{W}))-\mathbb{E}h(g(\Sigma^{1/2}\mathbf{Z}))|\\
&\leq\frac{h_3}{6n^{3/2}}\sum_{i=1}^n\sum_{j,k,l=1}^d\bigg\{A\mathbb{E}|X_{ij}X_{ik}X_{il}|+\sum_{t=1}^d 2^{r_t}B_t\bigg(2^{r_t}\mathbb{E}|X_{ij}X_{ik}X_{il}|\mathbb{E}|W_t|^{r_t}\\
&\quad+\frac{2^{r_t}}{n^{r_t/2}}\mathbb{E}|X_{ij}X_{ik}X_{il}X_{it}^{r_t}|+\mathbb{E}|Z_t|^{r_t}\mathbb{E}|X_{ij}X_{ik}X_{il}|\bigg)+2|\mathbb{E}X_{ij}X_{ik}|\bigg[A\mathbb{E}|X_{il}| \\
&\quad+\sum_{t=1}^d 2^{r_t}B_t\bigg(2^{r_t}\mathbb{E}|X_{il}|\mathbb{E}|W_t|^{r_t}+\frac{2^{r_t}}{n^{r_t/2}}\mathbb{E}|X_{il}X_{it}^{r_t}|+\mathbb{E}|Z_t|^{r_t}\mathbb{E}|X_{il}|\bigg)\bigg]\bigg\}.
\end{align*}
\end{theorem}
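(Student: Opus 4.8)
The plan is to substitute the pointwise derivative estimate of Lemma \ref{cbwbshc} into the abstract bound of Lemma \ref{noteveng}. Under the stated hypotheses $h\in C_b^3(\mathbb{R})$ and $g\in C_P^3(\mathbb{R}^d)$, inequality (\ref{sigmaiii}) of Lemma \ref{cor28} with $m=3$ shows that the third-order partial derivatives of the solution $f$ in (\ref{mvnsolnh}) are finite, so $f\in C^3(\mathbb{R}^d)$ and Lemma \ref{noteveng} is applicable. This reduces the task to estimating, for each $i$ and each triple $(j,k,l)$, the two expectations appearing in (\ref{springz}): one weighted by $|X_{ij}X_{ik}X_{il}|$ and one weighted by $|X_{il}|$.

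First I would bound the first expectation by applying inequality (\ref{cbhsxx}) of Lemma \ref{cbwbshc} with $m=3$ and weight $u(\mathbf{X}_i)=|X_{ij}X_{ik}X_{il}|$, and the second by the same inequality with $u(\mathbf{X}_i)=|X_{il}|$. Each application replaces the supremum over $\theta$ by an explicit sum involving $A$, the coefficients $B_t$, the moments $\mathbb{E}|W_t|^{r_t}$ and $\mathbb{E}|Z_t|^{r_t}$, and the mixed moments of the $X_{i\cdot}$. Carrying the prefactor $\tfrac{1}{2n^{3/2}}$ from Lemma \ref{noteveng} through the common factor $\tfrac{h_3}{3}$ produced by (\ref{cbhsxx}) gives the overall constant $\tfrac{h_3}{6n^{3/2}}$, and a direct comparison shows that the resulting expression is exactly the claimed bound, term by term.

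The only point requiring care — rather than a genuine obstacle — is checking that every expectation is finite, which is precisely what the extra hypothesis $\mathbb{E}|X_{ij}|^{r_k+3}<\infty$ guarantees. The most demanding moment requested by (\ref{cbhsxx}) in the first family is $\mathbb{E}|X_{ij}X_{ik}X_{il}X_{it}^{r_t}|$, which by the generalised H\"older inequality is controlled by moments of the $X_{i\cdot}$ of order $r_t+3$; the three linear factors carried through the Taylor remainder of Lemma \ref{noteveng} account for the ``$+3$'', and the second family is strictly less demanding. Once finiteness is in hand the argument is a mechanical substitution, and the remaining theorems flagged in the text follow by the same recipe, using (\ref{sigmaiii1}) (with $\Sigma$ positive definite) or the $\psi_{jkl}$-estimate (\ref{sigmaiii2}) in place of (\ref{cbhsxx}), and Lemma \ref{evennormal} in place of Lemma \ref{noteveng} for the even-$g$ cases.
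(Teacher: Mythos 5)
Your proposal is correct and follows exactly the paper's route: the paper proves Theorem \ref{theoremsec21} precisely by substituting inequality (\ref{cbhsxx}) (with the weights $u(\mathbf{X}_i)=|X_{ij}X_{ik}X_{il}|$ and $u(\mathbf{X}_i)=|X_{il}|$) into the bound (\ref{springz}) of Lemma \ref{noteveng}, and your constant bookkeeping $\tfrac{1}{2n^{3/2}}\cdot\tfrac{h_3}{3}=\tfrac{h_3}{6n^{3/2}}$ matches the stated bound term by term. Your additional remarks on verifying $f\in C^3(\mathbb{R}^d)$ and on the moment condition via H\"older are consistent with (indeed slightly more explicit than) the paper's treatment.
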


\begin{theorem}\label{theoremsec22}Let $X_{ij}$, $i=1,\ldots,n$, $j=1,\ldots,d$, be defined as in Lemma \ref{noteveng}, but with the additional assumption that $\mathbb{E}|X_{ij}|^{r_k+3}<\infty$ for all $i$, $j$ and $1\leq k\leq d$.   Suppose $\Sigma$ is positive definite and that $g\in C_P^2(\mathbb{R}^d)$.  Then, for $h\in C_b^2(\mathbb{R})$, 
\begin{align*}&|\mathbb{E}h(g(\mathbf{W}))-\mathbb{E}h(g(\Sigma^{1/2}\mathbf{Z}))|\leq\frac{h_2}{2n^{3/2}}\min_{1\leq s\leq d}\mathbb{E}|(\Sigma^{-1/2}\mathbf{Z})_s|\\
&\times\sum_{i=1}^n\sum_{j,k,l=1}^d\bigg\{A\mathbb{E}|X_{ij}X_{ik}X_{il}|+\sum_{t=1}^r 2^{r_t}B_t\bigg(2^{r_t}\mathbb{E}|X_{ij}X_{ik}X_{il}|\mathbb{E}|W_t|^{r_t}\\
&+\frac{2^{r_t}}{n^{r_t/2}}\mathbb{E}|X_{ij}X_{ik}X_{il}X_{it}^{r_t}|+\frac{\mathbb{E}|(\Sigma^{-1/2}\mathbf{Z})_sZ_t^{r_t}|}{\mathbb{E}|(\Sigma^{-1/2}\mathbf{Z})_s|}\mathbb{E}|X_{ij}X_{ik}X_{il}|\bigg)\\
&+2|\mathbb{E}X_{ij}X_{ik}|\bigg[A\mathbb{E}|X_{il}| 
+\sum_{t=1}^r 2^{r_t}B_t\bigg(2^{r_t}\mathbb{E}|X_{il}|\mathbb{E}|W_t|^{r_t}+\frac{2^{r_t}}{n^{r_t/2}}\mathbb{E}|X_{il}X_{it}^{r_t}|\\
&+\frac{\mathbb{E}|(\Sigma^{-1/2}\mathbf{Z})_sZ_t^{r_t}|}{\mathbb{E}|(\Sigma^{-1/2}\mathbf{Z})_s|}\mathbb{E}|X_{il}|\bigg)\bigg]\bigg\}.
\end{align*}
\end{theorem}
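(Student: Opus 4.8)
The plan is to mimic the proof of Theorem \ref{theoremsec21}, the only change being that I would invoke the second inequality of Lemma \ref{cbwbshc} in place of the first inequality (\ref{cbhsxx}). Recall that the second inequality is precisely the one adapted to a positive definite $\Sigma$: it controls the $m$-th order partial derivatives of $f$ through the factor $h_{m-1}$ (rather than $h_m$) and the auxiliary quantity $\min_{1\le s\le d}\mathbb{E}|(\Sigma^{-1/2}\mathbf{Z})_s|$, and it demands only that $g\in C_P^{m-1}(\mathbb{R}^d)$. Taken with $m=3$, this requires $g\in C_P^2(\mathbb{R}^d)$ and $h\in C_b^2(\mathbb{R})$, which are exactly the hypotheses of the theorem, so the positive-definiteness assumption is what buys us the reduction of one derivative on both $g$ and $h$ relative to Theorem \ref{theoremsec21}.

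First I would check that Lemma \ref{noteveng} applies, that is, that $f\in C^3(\mathbb{R}^d)$ and that the expectations on the right-hand side of (\ref{springz}) are finite. Positive definiteness of $\Sigma$ together with $g\in C_P^2(\mathbb{R}^d)$ and $h\in C_b^2(\mathbb{R})$ ensures, via the bound (\ref{sigmaiii1}) of Lemma \ref{cor28} (taken with $m=3$), that the third-order partial derivatives of $f$ exist and are dominated by a polynomial in $\mathbf{w}$. The additional moment hypothesis $\mathbb{E}|X_{ij}|^{r_k+3}<\infty$ then guarantees finiteness of the expectations in (\ref{springz}): the term of highest polynomial order appearing after substitution is of the form $\mathbb{E}|X_{ij}X_{ik}X_{il}X_{it}^{r_t}|$, whose finiteness follows from Hölder's inequality, since each of the four factors has a finite moment of order $r_t+3$.

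With the hypotheses verified, I would start from the bound (\ref{springz}) of Lemma \ref{noteveng} and bound the two families of expectations on its right-hand side using the second inequality of Lemma \ref{cbwbshc}, taken with $m=3$. For the first family I would set $u(\mathbf{X}_i)=|X_{ij}X_{ik}X_{il}|$ and for the second $u(\mathbf{X}_i)=|X_{il}|$; in both cases $h_{m-1}=h_2$, and the right-hand sides of Lemma \ref{cbwbshc} are uniform in $\theta\in(0,1)$, so the suprema over $\theta$ in (\ref{springz}) cause no difficulty. Substituting and factoring out the common prefactor $\frac{h_2}{2n^{3/2}}\min_{1\le s\le d}\mathbb{E}|(\Sigma^{-1/2}\mathbf{Z})_s|$ then reproduces the stated bound term by term, with the coefficient $2$ on the $|\mathbb{E}X_{ij}X_{ik}|$ contribution carried over directly from (\ref{springz}).

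I do not anticipate any genuine obstacle: the argument is a direct plug-in, and essentially all of the content sits in the preparatory Lemmas \ref{noteveng}, \ref{cor28} and \ref{cbwbshc}. The only points requiring care are the bookkeeping of indices when matching the two choices of $u$ against the two summands of (\ref{springz}), and the verification of finiteness of the moment expressions; both are routine. If anything is delicate, it is simply making sure that the index of the inner polynomial sum (written as running to $r$ in the statement, but to $d$ in Lemma \ref{cbwbshc}) is interpreted consistently, which is immaterial in the applications where $d=r$.
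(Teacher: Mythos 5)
Your proposal is correct and follows exactly the route the paper itself takes: the paper proves Theorem \ref{theoremsec22} by substituting the second inequality of Lemma \ref{cbwbshc} (the positive-definite case, with $m=3$, giving the factor $h_2\min_{1\leq s\leq d}\mathbb{E}|(\Sigma^{-1/2}\mathbf{Z})_s|$) into the bound (\ref{springz}) of Lemma \ref{noteveng}, with $u(\mathbf{X}_i)=|X_{ij}X_{ik}X_{il}|$ and $u(\mathbf{X}_i)=|X_{il}|$ for the two families of terms, just as you describe. Your additional checks (that $f\in C^3(\mathbb{R}^d)$ via (\ref{sigmaiii1}), finiteness of the expectations via the moment hypothesis and H\"older, and the $r$-versus-$d$ index typo in the inner sum) are all sound and merely make explicit what the paper leaves as ``proved similarly.''
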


\begin{theorem}\label{thmsec2}Let $X_{ij}$, $i=1,\ldots,n$, $j=1,\ldots,d$, be defined as in Lemma \ref{noteveng}, but with the additional assumption that $\mathbb{E}|X_{ij}|^{r_k+4}<\infty$ for all $i,$ $j$ and $1\leq k\leq d$.  Suppose $\Sigma$ is non-negative definite and that $g\in C_P^6(\mathbb{R}^d)$ is an even function.  Then, for $h\in C_b^6(\mathbb{R})$,  
\begin{align}&|\mathbb{E}h(g(\mathbf{W}))-\mathbb{E}h(g(\Sigma^{1/2}\mathbf{Z}))|\leq M:=\frac{h_4}{24n^{2}}\sum_{i=1}^n\sum_{j,k,l,m=1}^d\bigg\{A\mathbb{E}|X_{ij}X_{ik}X_{il}X_{im}|\nonumber\\
&+\sum_{t=1}^d 2^{r_t}B_t\bigg(2^{r_t}\mathbb{E}|X_{ij}X_{ik}X_{il}X_{im}|\mathbb{E}|W_t|^{r_t}+\frac{2^{r_t}}{n^{r_t/2}}\mathbb{E}|X_{ij}X_{ik}X_{il}X_{im}X_{it}^{r_t}|\nonumber\\
&+\mathbb{E}|Z_t|^{r_t}\mathbb{E}|X_{ij}X_{ik}X_{il}X_{im}|\bigg)+9|\mathbb{E}X_{ij}X_{ik}|\bigg[A\mathbb{E}|X_{il}X_{im}| \nonumber\\
&+\sum_{t=1}^d 2^{r_t}B_t\bigg(2^{r_t}\mathbb{E}|X_{il}X_{im}|\mathbb{E}|W_t|^{r_t}+\frac{2^{r_t}}{n^{r_t/2}}\mathbb{E}|X_{il}X_{im}X_{it}^{r_t}|+\mathbb{E}|Z_t|^{r_t}\mathbb{E}|X_{il}X_{im}|\bigg)\bigg]\nonumber \\
&+3|\mathbb{E}X_{ij}X_{ik}X_{il}|\bigg[A\mathbb{E}|X_{im}|+\sum_{t=1}^d 2^{r_t}B_t\bigg(2^{r_t}\mathbb{E}|X_{im}|\mathbb{E}|W_t|^{r_t}+\frac{2^{r_t}}{n^{r_t/2}}\mathbb{E}|X_{it}^{r_t}|\nonumber\\
&+\mathbb{E}|Z_t|^{r_t}\mathbb{E}|X_{im}|\bigg)\bigg]\bigg\}+\frac{h_6}{72n^3}\sum_{\alpha=1}^n\sum_{a,b,c=1}^d|\mathbb{E}X_{\alpha a}X_{\alpha b}X_{\alpha c}|\sum_{i=1}^n\sum_{j,k,l=1}^d\bigg\{A\mathbb{E}|X_{ij}X_{ik}X_{il}|\nonumber\\
&+\sum_{t=1}^d 3^{r_t}B_t\bigg(2^{r_t}\mathbb{E}|X_{ij}X_{ik}X_{il}|\mathbb{E}|W_t|^{r_t}+\frac{2^{r_t}}{n^{r_t/2}}\mathbb{E}|X_{ij}X_{ik}X_{il}X_{it}^{r_t}| \nonumber\\
&+2\mathbb{E}|Z_t|^{r_t+1}\mathbb{E}|X_{ij}X_{ik}X_{il}|\bigg)+2|\mathbb{E}X_{ij}X_{ik}|\bigg[A\mathbb{E}|X_{il}|\nonumber\\
\label{boundthm2}&+\sum_{t=1}^d 3^{r_t}B_t\bigg(2^{r_t}\mathbb{E}|X_{il}|\mathbb{E}|W_t|^{r_t}+\frac{2^{r_t}}{n^{r_t/2}}\mathbb{E}|X_{il}X_{it}^{r_t}|+2\mathbb{E}|Z_t|^{r_t+1}\mathbb{E}|X_{il}|\bigg)\bigg]\bigg\}.
\end{align}
\end{theorem}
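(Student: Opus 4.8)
The plan is to start from the bound (\ref{dig hole}) of Lemma \ref{evennormal} and to replace each of its five expectations involving derivatives of $f$ and $\psi_{jkl}$ by the explicit moment expressions appearing in $M$, using the inequalities of Lemma \ref{cbwbshc}. First I would check that the hypotheses of Lemma \ref{evennormal} are met. The function $g$ is even by assumption, and the polynomial-growth bounds (\ref{sigmaiii}) (with $m=4$) and (\ref{sigmaiii2}) of Lemma \ref{cor28}, which are available since $g\in C_P^6(\mathbb{R}^d)$ and $h\in C_b^6(\mathbb{R})$, show that the fourth-order partial derivatives of $f$ and the third-order partial derivatives of $\psi_{jkl}$ exist and grow at most polynomially; thus $f\in C^4(\mathbb{R}^d)$ and $\psi_{jkl}\in C^3(\mathbb{R}^d)$. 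Together with the moment hypothesis $\mathbb{E}|X_{ij}|^{r_k+4}<\infty$ and H\"older's inequality, this also ensures that every expectation on the right-hand side of (\ref{dig hole}) is finite.

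Next I would dispatch the first sum of (\ref{dig hole}), whose three summands all carry a fourth-order derivative of $f$ evaluated at $\mathbf{W}_\theta^{(i)}$. To each summand I apply the first inequality (\ref{cbhsxx}) of Lemma \ref{cbwbshc} with $m=4$, taking in turn $u(\mathbf{X}_i)=|X_{ij}X_{ik}X_{il}X_{it}|$, $u(\mathbf{X}_i)=|X_{il}X_{it}|$ and $u(\mathbf{X}_i)=|X_{it}|$, the last two weighted by the prefactors $9|\mathbb{E}X_{ij}X_{ik}|$ and $3|\mathbb{E}X_{ij}X_{ik}X_{il}|$. The prefactor $\tfrac{1}{6n^2}$ of (\ref{dig hole}) combines with the $\tfrac{h_4}{4}$ supplied by (\ref{cbhsxx}) to produce $\tfrac{h_4}{24n^2}$, and the $2^{r_t}B_t$ weights together with the $\mathbb{E}|Z_t|^{r_t}$ terms are inherited directly; relabelling the fourth derivative index $t$ as $m$ (and calling the polynomial-growth summation index $t$) yields exactly the $\tfrac{h_4}{24n^2}$ block of $M$.

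For the second sum of (\ref{dig hole}), which contains the third-order derivatives of the Stein solution $\psi_{jkl}$, I apply the third inequality of Lemma \ref{cbwbshc} to its two inner summands, taking $u=|X_{\alpha a}X_{\alpha b}X_{\alpha c}|$ and $u=|X_{\alpha c}|$ (the latter weighted by $2|\mathbb{E}X_{\alpha a}X_{\alpha b}|$). Here the bound (\ref{sigmaiii2}) produces the $3^{r_t}B_t$ weights, the $2\mathbb{E}|Z_t|^{r_t+1}$ terms and the factor $\tfrac{h_6}{18}$, which together with the prefactor $\tfrac{1}{4n^3}$ gives $\tfrac{h_6}{72n^3}$ and the remaining block of $M$ (after the harmless relabelling $(i,j,k,l)\leftrightarrow(\alpha,a,b,c)$ of the two outer summation blocks). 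It is worth recalling that this part of (\ref{dig hole}) arises, inside the proof of Lemma \ref{evennormal}, from a nested application of Lemma \ref{noteveng} to $\psi_{jkl}$, so the inner sum runs over a second trial index $\alpha$ and the relevant derivative is evaluated at the corresponding leave-one-out perturbation; accordingly the function $u$ fed into Lemma \ref{cbwbshc} depends only on $\mathbf{X}_\alpha$ and is independent of that perturbation, exactly as the inequality requires.

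Collecting the two blocks gives the stated bound $M$. I do not anticipate a genuine conceptual difficulty: each step is a direct substitution, and the sufficiency of the moment condition has already been built into Lemmas \ref{evennormal} and \ref{cbwbshc}. The real work, and the main source of possible error, is bookkeeping: matching each of the five derivative terms of (\ref{dig hole}) to the correct instance of Lemma \ref{cbwbshc}, tracking the numerical constants $1,9,3$ and $1,2$ and the prefactors, and in particular distinguishing the $2^{r_t}$ factors coming from the $f$-bound (\ref{sigmaiii}) from the $3^{r_t}$ factors coming from the $\psi$-bound (\ref{sigmaiii2}), while confirming via the $(r_k+4)$-th moment assumption that products such as $\mathbb{E}|X_{ij}X_{ik}X_{il}X_{im}X_{it}^{r_t}|$ are finite.
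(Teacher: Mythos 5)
Your proposal is correct and follows exactly the route the paper itself takes: the paper proves Theorem \ref{thmsec2} precisely by substituting the first and third inequalities of Lemma \ref{cbwbshc} into the bound (\ref{dig hole}) of Lemma \ref{evennormal}, with the constants combining as you compute ($\tfrac{1}{6n^2}\cdot\tfrac{h_4}{4}=\tfrac{h_4}{24n^2}$ and $\tfrac{1}{4n^3}\cdot\tfrac{h_6}{18}=\tfrac{h_6}{72n^3}$). Your observation that the $\psi_{jkl}$-terms arise from a nested application of Lemma \ref{noteveng} over the second trial index $\alpha$, so that the derivative is evaluated at the leave-one-out perturbation in $\alpha$ and Lemma \ref{cbwbshc} applies with $u$ depending only on $\mathbf{X}_\alpha$, is exactly the right reading of (\ref{dig hole}).
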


\begin{theorem}\label{thmsec21}Let $X_{ij}$, $i=1,\ldots,n$, $j=1,\ldots,d$, be defined as in Lemma \ref{noteveng}, but with the additional assumption that $\mathbb{E}|X_{ij}|^{r_k+4}<\infty$ for all $i,$ $j$ and $1\leq k\leq d$.  Suppose $\Sigma$ is non-negative definite and that $\mathbb{E}X_{ij}X_{ik}X_{il}=0$ for all $1\leq i\leq n$ and $1\leq j,k,l\leq d$. Suppose $g\in C_P^4(\mathbb{R}^d)$.  Then, for $h\in C_b^4(\mathbb{R})$,  
\begin{align}&|\mathbb{E}h(g(\mathbf{W}))-\mathbb{E}h(g(\Sigma^{1/2}\mathbf{Z}))| \leq\frac{h_4}{24n^{2}}\sum_{i=1}^n\sum_{j,k,l,m=1}^d\bigg\{A\mathbb{E}|X_{ij}X_{ik}X_{il}X_{im}|\nonumber\\
&+\sum_{t=1}^d 2^{r_t}B_t\bigg(2^{r_t}\mathbb{E}|X_{ij}X_{ik}X_{il}X_{im}|\mathbb{E}|W_t|^{r_t}+\frac{2^{r_t}}{n^{r_t/2}}\mathbb{E}|X_{ij}X_{ik}X_{il}X_{im}X_{it}^{r_t}|\nonumber\\
&+\mathbb{E}|Z_t|^{r_t}\mathbb{E}|X_{ij}X_{ik}X_{il}X_{im}|\bigg)+9|\mathbb{E}X_{ij}X_{ik}|\bigg[A\mathbb{E}|X_{il}X_{im}|\nonumber\\
\label{boundtheorem}&+\sum_{t=1}^d 2^{r_t}B_t\bigg(2^{r_t}\mathbb{E}|X_{il}X_{im}|\mathbb{E}|W_t|^{r_t}+\frac{2^{r_t}}{n^{r_t/2}}\mathbb{E}|X_{il}X_{im}X_{it}^{r_t}|+\mathbb{E}|Z_t|^{r_t}\mathbb{E}|X_{il}X_{im}|\bigg)\bigg]\bigg\}.
\end{align}
\end{theorem}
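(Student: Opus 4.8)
The plan is to run the Taylor expansion from the proof of Lemma \ref{evennormal} unchanged up to the decomposition (\ref{nearlyeven}), and then to use the hypothesis $\mathbb{E}X_{ij}X_{ik}X_{il}=0$ in place of the evenness of $g$. The point is that nothing in that expansion actually requires $g$ to be even: it produces (\ref{nearlyeven}), namely a leading term $\frac{1}{2n^{3/2}}\sum_{i=1}^n\sum_{j,k,l=1}^d|\mathbb{E}X_{ij}X_{ik}X_{il}|\,\big|\mathbb{E}\frac{\partial^3 f}{\partial w_j\partial w_k\partial w_l}(\mathbf{W})\big|$ together with the remainders $R_1,\dots,R_4$. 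Evenness entered only afterwards, to tame this leading term: it made $f$ even, forced $\mathbb{E}\frac{\partial^3 f}{\partial w_j\partial w_k\partial w_l}(\Sigma^{1/2}\mathbf{Z})=0$, and via a second application of Lemma \ref{noteveng} (the identity (\ref{mpat})) reduced the leading term to $O(n^{-1})$ at the price of the $\psi_{jkl}$ block, that is, the $h_6/(72n^3)$ terms of Theorem \ref{thmsec2}.

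Under the present assumption this machinery is redundant. Because $\mathbb{E}X_{ij}X_{ik}X_{il}=0$, the leading $n^{-3/2}$ term in (\ref{nearlyeven}) vanishes identically; moreover the quantity $N_1$ in the proof of Lemma \ref{evennormal} is already zero, so the remainder $R_3$ is never generated (while $R_4$, coming from $N_2$, survives because it rests only on $\mathbb{E}X_{il}=0$). I am therefore left with precisely $|R_1|+|R_2|+|R_4|$, and no equation (\ref{234multinor}) for $\psi_{jkl}$ is needed. This is exactly why the asserted bound (\ref{boundtheorem}) equals the first ($h_4/(24n^2)$) block of Theorem \ref{thmsec2} with the $3|\mathbb{E}X_{ij}X_{ik}X_{il}|$ sub-block and the entire second block deleted.

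It then remains to bound $|R_1|+|R_2|+|R_4|$ using the polynomial growth of the derivatives of $g$. Here I would apply inequality (\ref{cbhsxx}) of Lemma \ref{cbwbshc} with $m=4$ to each fourth-order derivative expectation: for $R_1$ take $u(\mathbf{X}_i)=|X_{ij}X_{ik}X_{il}X_{im}|$, and for $R_2$ and $R_4$ first pull out the scalar $|\mathbb{E}X_{ij}X_{ik}|$ and then take $u(\mathbf{X}_i)=|X_{il}X_{im}|$. The smoothness hypotheses $g\in C_P^4(\mathbb{R}^d)$ and $h\in C_b^4(\mathbb{R})$ guarantee, via (\ref{sigmaiii}) of Lemma \ref{cor28}, that $f\in C^4(\mathbb{R}^d)$ with the required derivative bound, while the moment hypothesis $\mathbb{E}|X_{ij}|^{r_k+4}<\infty$ ensures that all resulting expectations, in particular $\mathbb{E}|X_{ij}X_{ik}X_{il}X_{im}X_{it}^{r_t}|$, are finite. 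Collecting the terms then yields (\ref{boundtheorem}).

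The only delicate point, and the closest thing to an obstacle, is the bookkeeping of the combinatorial constants. Since $|R_1|\leq\frac{1}{6n^2}(\cdots)$ while $|R_2|+|R_4|\leq(\frac{1}{2}+1)\frac{1}{n^2}(\cdots)=\frac{3}{2n^2}(\cdots)$, and since $m=4$ contributes the factor $\frac{h_4}{4}$ in (\ref{cbhsxx}), factoring out the common prefactor $\frac{h_4}{24n^2}$ (note $\frac{h_4}{4}\cdot\frac{1}{6}=\frac{h_4}{24}$) shows that the $R_2+R_4$ contribution is weighted by $(3/2)/(1/6)=9$, which is exactly the coefficient of $|\mathbb{E}X_{ij}X_{ik}|$ in (\ref{boundtheorem}). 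The $O(n^{-1})$ rate is then transparent: the prefactor is $n^{-2}$, the sum over $i$ contributes a factor $n$, and each surviving expectation is $O(1)$ under the moment assumption.
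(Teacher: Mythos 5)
Your proposal is correct and follows essentially the same route as the paper: the paper's proof likewise observes that when $\mathbb{E}X_{ij}X_{ik}X_{il}=0$ the leading term of (\ref{nearlyeven}) (and with it $N_1$, hence $R_3$, and the entire $\psi_{jkl}$ correction) disappears, leaving only the remainders bounded via Lemma \ref{cbwbshc}, so that evenness of $g$ is no longer needed and $g\in C_P^4(\mathbb{R}^d)$, $h\in C_b^4(\mathbb{R})$ suffice. Your explicit tracking of the constants (the factor $9$ from $|R_2|+|R_4|\leq\tfrac{3}{2n^2}(\cdots)$ against the prefactor $\tfrac{1}{6n^2}$, combined with $h_4/4$ from (\ref{cbhsxx})) is a correct and welcome elaboration of what the paper leaves implicit.
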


\begin{proof}Notice that in the proof of Lemma \ref{evennormal} the bound (\ref{nearlyeven}) reduces to $|R_1|+|R_2|+|R_3|+|R_4|$ when $\mathbb{E}X_{ij}X_{ik}X_{il}=0$ for all $1\leq i\leq n$ and $1\leq j,k,l\leq d$.  Therefore the terms involving a multiple of $\mathbb{E}X_{ij}X_{ik}X_{il}$ vanish from the bound (\ref{boundthm2}), and we no longer require that $g$ is even and also only need $g$ to belong to the class $C_P^4(\mathbb{R}^d)$ and $h$ to belong to $C_b^4(\mathbb{R})$.
\end{proof}

\subsection{Relaxing the condition that $g$ is even}

For our application to the rate of convergence of the power divergence statistics we shall need a slight relaxation of the assumption from Theorem \ref{thmsec2} that $g$ is an even function.  Looking back at the proof of Lemma \ref{evennormal}, we see that a crucial step in obtaining the $O(n^{-1})$ rate of Theorem \ref{thmsec2} was the result that $\mathbb{E}\frac{\partial^{3} f}{\partial w_j\partial w_k\partial w_l}(\mathbf{W})$ is of order $n^{-1/2}$ when $g$ is even function satisfying suitable differentiability and boundedness conditions.  We were able to obtain this result by using the fact that $\mathbb{E}\frac{\partial^{3} f}{\partial w_j\partial w_k\partial w_l}(\Sigma^{1/2}\mathbf{Z})=0$ for such $g$.  However, it actually suffices that $\mathbb{E}\frac{\partial^{3} f}{\partial w_j\partial w_k\partial w_l}(\Sigma^{1/2}\mathbf{Z})=O(n^{-1/2})$ in order to obtain a final bound of order $n^{-1}$.  This offers the scope for relaxing the condition that $g$ is an even function.  Suppose $g:\mathbb{R}^d\rightarrow\mathbb{R}$ is such that 
\begin{equation}\label{gdelta}g(\mathbf{w})=a(\mathbf{w})+\delta b(\mathbf{w}),
\end{equation}
where $a:\mathbb{R}^d\rightarrow\mathbb{R}$ and $b:\mathbb{R}^d\rightarrow\mathbb{R}$ satisfy suitable boundedness and differentiability conditions, $a$ is an even function and $0\leq\delta <1$ is a constant that we shall often think of as being `small'.  Through a sequence of lemmas we shall see that, for such a $g$, we have $\mathbb{E}\frac{\partial^{3} f}{\partial w_j\partial w_k\partial w_l}(\Sigma^{1/2}\mathbf{Z})=O(\delta )$.  This will enable us to obtain an extension of Theorem \ref{thmsec2} to the case that $g$ is of the form (\ref{gdelta}).  This theorem will enable us to obtain $O(n^{-1})$ bounds for the rate of convergence of the power divergence statistics for certain values of the index parameter $\lambda$; see Section 3.3. 

We begin by obtaining simple useful formula for the partial derivatives of the test function $h(g(\cdot))$, where $g$ is of the form (\ref{gdelta}).  Before deriving this formula, we state some preliminary results.  The first is a multivariate generalisation of the Fa\`{a} di Bruno formula for $n$-th order derivatives of composite functions, due to \cite{ma}:
\begin{equation}\label{bruno}\frac{\partial^m}{\prod_{j=1}^n\partial w_{i_j}}h(g(\mathbf{w}))=\sum_{\pi\in\Pi}h^{(|\pi|)}(g(\mathbf{w}))\cdot\prod_{B\in\pi}\frac{\partial^{|B|}g(\mathbf{w})}{\prod_{j\in B}\partial w_j},
\end{equation}
where $\pi$ runs through the set $\Pi$ of all partitions of the set $\{1,\ldots,m\}$, the product is over all of the parts $B$ of the partition $\pi$, and $|S|$ is the cardinality of the set $S$.  It is useful to note that the number of partitions of $\{1,\ldots,m\}$ into $k$ non-empty subsets is given by the Stirling number of the second kind ${m\brace k}$ (see \cite{olver}).

We now introduce a class of functions that will be play a similar role to the class of functions $C_P^m(\mathbb{R}^d)$ of Section 2.2.    We say that the function $g:\mathbb{R}^d\rightarrow\mathbb{R}$ belongs to the class $C_{Q,\delta}^m(\mathbb{R}^d)$ if $g$ can be written in the form (\ref{gdelta}), that all $m$-th order partial derivatives of $a$ and $b$ exist and there exists a dominating function $Q:\mathbb{R}^d\rightarrow\mathbb{R}^+$ such that, for all $\mathbf{w}\in\mathbb{R}^d$, the quantities
\begin{equation*}\bigg|\frac{\partial^ka(\mathbf{w})}{\prod_{j=1}^k\partial w_{i_j}}\bigg|^{m/k},\quad \bigg|\frac{\partial^kb(\mathbf{w})}{\prod_{j=1}^k\partial w_{i_j}}\bigg|^{m/k}, \quad |b(\mathbf{w})|\bigg|\frac{\partial^ka(\mathbf{w})}{\prod_{j=1}^k\partial w_{i_j}}\bigg|^{m/k}, \quad 1\leq k\leq m,
\end{equation*}
are all bounded by $Q(\mathbf{w})$.  If $g\in C_{Q,\delta}^m(\mathbb{R}^d)$ then it is easy to see that, for all $\mathbf{w}\in\mathbb{R}^d$, the quantities
\begin{equation*}\bigg|\prod_{B\in\pi}\frac{\partial^{|B|}a(\mathbf{w})}{\prod_{j\in B}\partial w_j}\bigg|, \quad \bigg|\prod_{B\in\pi}\frac{\partial^{|B|}b(\mathbf{w})}{\prod_{j\in B}\partial w_j}\bigg|, \quad \bigg|b(\mathbf{w})\prod_{B\in\pi}\frac{\partial^{|B|}a(\mathbf{w})}{\prod_{j\in B}\partial w_j}\bigg|
\end{equation*}
are all bounded $Q(\mathbf{w})$.  With these inequalities we are able to prove the following lemma.

\begin{lemma}\label{bell lem}Suppose $h\in C_b^m(\mathbb{R})$ and that $g$, defined by $g(\mathbf{w})=a(\mathbf{w})+\delta b(\mathbf{w})$, is in the class $C_{Q,\delta}^m(\mathbb{R}^d)$.  Then, for all $\mathbf{w}\in\mathbb{R}^d$,
\begin{equation}\label{hgdiff}\frac{\partial^m}{\prod_{j=1}^m\partial w_{i_j}}h(g(\mathbf{w}))=\frac{\partial^m}{\prod_{j=1}^m\partial w_{i_j}}h(a(\mathbf{w}))+\delta q(\mathbf{w}),
\end{equation}
where
\begin{equation}\label{qbound}|q(\mathbf{w})|\leq \delta \tilde{h}_mQ(\mathbf{w})
\end{equation}
and $\tilde{h}_m=\sum_{k=1}^m{m\brace k}\big(2^m\|h^{(k)}\|+\|h^{(k+1)}\|\big)$.
\end{lemma}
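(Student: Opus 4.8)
The plan is to apply the multivariate Fa\`{a} di Bruno formula (\ref{bruno}) to both $h(g(\mathbf{w}))$ and $h(a(\mathbf{w}))$ and to subtract the two expansions partition by partition, extracting from every resulting summand an explicit factor of $\delta$. Writing $g_B=\partial^{|B|}g/\prod_{j\in B}\partial w_j$ and likewise $a_B,b_B$ for the block-derivatives of $a$ and $b$, and using $g=a+\delta b$ so that $g_B=a_B+\delta b_B$, formula (\ref{bruno}) gives
\[
\frac{\partial^m}{\prod_{j=1}^m\partial w_{i_j}}h(g(\mathbf{w}))-\frac{\partial^m}{\prod_{j=1}^m\partial w_{i_j}}h(a(\mathbf{w}))=\sum_{\pi\in\Pi}\Big[h^{(|\pi|)}(g)\prod_{B\in\pi}g_B-h^{(|\pi|)}(a)\prod_{B\in\pi}a_B\Big].
\]
It then suffices to bound each summand $\Delta_\pi$ by a constant multiple of $\delta Q(\mathbf{w})$ and sum over $\pi$.

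For a fixed $\pi$ with $k:=|\pi|$ blocks, I would split $\Delta_\pi$ by the add-and-subtract identity
\[
\Delta_\pi=\big[h^{(k)}(g)-h^{(k)}(a)\big]\prod_{B\in\pi}a_B+h^{(k)}(g)\Big[\prod_{B\in\pi}g_B-\prod_{B\in\pi}a_B\Big].
\]
In the first term, a mean value expansion of $h^{(k)}$ about $a$, using $g-a=\delta b$, gives $h^{(k)}(g)-h^{(k)}(a)=\delta b\,h^{(k+1)}(\xi)$ for some intermediate point $\xi$; this produces the factor $\delta$, the derivative $h^{(k+1)}$, and the coefficient $b\prod_{B}a_B$. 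In the second term, expanding $\prod_{B}(a_B+\delta b_B)-\prod_{B}a_B$ yields a sum over the $2^{k}-1$ nonempty subsets of blocks, every term divisible by $\delta$; hence this bracket equals $\delta R_\pi$, with $R_\pi$ a sum of mixed products of the $a_B$ and $b_B$.

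The remaining step is to bound the various block products uniformly by $Q(\mathbf{w})$, which is exactly what membership in $C_{Q,\delta}^m(\mathbb{R}^d)$ supplies. Since $|a_B|^{m/|B|},|b_B|^{m/|B|}\le Q$ and $\sum_{B\in\pi}|B|=m$, the H\"{o}lder-type bookkeeping $\prod_{B}Q^{|B|/m}=Q$ shows that any product selecting one of $a_B$ or $b_B$ from each block is bounded by $Q$, and the estimate $|b|\,\big|\prod_{B}a_B\big|\le Q$ follows the same way from the third defining quantity of the class. Using $0\le\delta<1$ to discard the surplus powers of $\delta$ in $R_\pi$ gives $|R_\pi|\le(2^{k}-1)Q$ and therefore $|\Delta_\pi|\le\delta\big[(2^{k}-1)\|h^{(k)}\|+\|h^{(k+1)}\|\big]Q$. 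Summing over $\pi$, grouping partitions by their number of blocks (there are ${m\brace k}$ partitions with exactly $k$ blocks) and bounding $2^{k}\le2^{m}$, produces the bound $\delta\tilde{h}_mQ(\mathbf{w})$ on the derivative difference, which is precisely the representation (\ref{hgdiff}) with $q$ obeying the growth bound (\ref{qbound}).

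The main obstacle is the uniform control of the mixed block products by a single dominating function: individually neither the cross terms $\prod_{B\in S}b_B\prod_{B\notin S}a_B$ nor the factor $b\prod_{B}a_B$ is handled by the plain class $C_P^m(\mathbb{R}^d)$, and it is exactly to keep the exponent count $\sum_{B}|B|/m=1$ available that $C_{Q,\delta}^m(\mathbb{R}^d)$ is defined to dominate $|b|\,\big|\partial^k a/\prod\partial w\big|^{m/k}$ as well. Once all these products sit under the same $Q$, the only analytic inputs are the elementary mean value step and the inequality $\delta<1$, and the combinatorics (the partition count ${m\brace k}$, the subset count $2^{k}$, and the derivative orders $k$ and $k+1$) assemble cleanly into the constant $\tilde{h}_m$.
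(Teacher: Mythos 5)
Your proof is correct and follows essentially the same route as the paper's: the multivariate Fa\`{a} di Bruno formula, an expansion of the block-derivative products whose cross terms each carry a factor of $\delta$ and are controlled by the H\"older-type mixed-product estimates built into $C_{Q,\delta}^m(\mathbb{R}^d)$, and a mean value step for $h^{(k)}(g)-h^{(k)}(a)$ using $|b\prod_B a_B|\leq Q$, assembled with the same ${m\brace k}$ and $2^m$ counting into $\tilde{h}_m$ (your per-partition add-and-subtract is just the paper's two sequential remainders $r_1,r_2$ written in one step). Note that, exactly as in the paper's own proof, what you actually establish is that the derivative difference is at most $\delta\tilde{h}_mQ(\mathbf{w})$, i.e$.$ $|q(\mathbf{w})|\leq\tilde{h}_mQ(\mathbf{w})$ --- which is what is used downstream in (\ref{onway2}) --- rather than the literal bound (\ref{qbound}) with its additional factor of $\delta$, so your argument matches the paper's in both substance and in this minor discrepancy with the stated inequality.
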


\begin{proof}By (\ref{bruno}), we have that
\begin{align*}\frac{\partial^m}{\prod_{j=1}^m\partial w_{i_j}}h(g(\mathbf{w}))&=\sum_{\pi\in\Pi}h^{(|\pi|)}(g(\mathbf{w}))\cdot\prod_{B\in\pi}\bigg(\frac{\partial^{|B|}a(\mathbf{w})}{\prod_{j\in B}\partial w_j}+\delta \frac{\partial^{|B|}b(\mathbf{w})}{\prod_{j\in B}\partial w_j}\bigg) \\
&=\sum_{\pi\in\Pi}h^{(|\pi|)}(g(\mathbf{w}))\cdot\prod_{B\in\pi}\frac{\partial^{|B|}a(\mathbf{w})}{\prod_{j\in B}\partial w_j}+r_1(\mathbf{w}),
\end{align*}
where $r_1$ satisfies the crude inequality, for all $\mathbf{w}\in\mathbb{R}^d$, 
\begin{equation*}|r_1(\mathbf{w})|\leq \delta \sum_{k=1}^m{m\brace k}2^m\|h^{(k)}\|Q(\mathbf{w}),
\end{equation*}
as $\delta <1$.  By the mean value theorem we have that
\begin{align*}\frac{\partial^m}{\prod_{j=1}^m\partial w_{i_j}}h(g(\mathbf{w}))&=\sum_{\pi\in\Pi}h^{(|\pi|)}(a(\mathbf{w}))\cdot\prod_{B\in\pi}\frac{\partial^{|B|}a(\mathbf{w})}{\prod_{j\in B}\partial w_j}+r_1(\mathbf{w})+r_2(\mathbf{w})\\
&=\frac{\partial^m}{\prod_{j=1}^m\partial w_{i_j}}h(a(\mathbf{w}))+r_1(\mathbf{w})+r_2(\mathbf{w}),
\end{align*}
where $r_2$ is bounded for all $\mathbf{w}\in\mathbb{R}^d$ by
\begin{align*}|r_2(\mathbf{w})|&\leq \delta \sum_{\pi\in\Pi}\|h^{(|\pi|+1)}\|\bigg|b(\mathbf{w})\prod_{B\in\pi}\frac{\partial^{|B|}a(\mathbf{w})}{\prod_{j\in B}\partial w_j}\bigg|\\
&\leq \delta \sum_{k=1}^m{m\brace k}\|h^{(k+1)}\|Q(\mathbf{w}).
\end{align*}
Summing up the remainders $r_1(\mathbf{w})+r_2(\mathbf{w})$ completes the proof.
\end{proof}

So far, we have imposed no conditions on the dominating function $Q$.  However, from now on, we shall suppose that $Q(\mathbf{w})=A+\sum_{i=1}^dB_i|w_i|^{r_i}$, where $A\geq 0$, $B_1,\ldots,B_d\geq 0$ and $r_1,\ldots,r_d\geq 0$.  Thus, $Q$ takes the same form as $P$ did in Section 2.2.  We shall restrict our attention to such a dominating function in this paper, but we note that we could obtain an analogue of the following lemma for dominating functions that have exponential growth (see \cite{gaunt normal}, Section 2).  However, for our applications, we shall not need such a lemma, so we omit it for space reasons.  

\begin{lemma}Let $m\geq1$ be odd. Suppose that $h\in C_b^m(\mathbb{R})$ and $g\in C_{Q,\delta}^m(\mathbb{R}^d)$.  Let $f$ denote the solution (\ref{mvnsolnh}).  Then
\begin{equation}\label{onway2}\bigg|\mathbb{E}\bigg[\frac{\partial^mf}{\prod_{j=1}^m\partial w_{i_j}}(\Sigma^{1/2}\mathbf{Z})\bigg]\bigg|\leq \delta \frac{\tilde{h}_m}{m}\bigg[A+\sum_{i=1}^d2^{r_i+1}B_i\mathbb{E}|Z_i|^{r_i}\bigg].
\end{equation}
\end{lemma}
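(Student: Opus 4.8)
The plan is to reduce the expectation of an $m$-th order partial derivative of the solution $f$ to the expectation of the corresponding $m$-th order partial derivative of the test function $h(g(\cdot))$ under $\mathrm{MVN}(\mathbf{0},\Sigma)$, and then to exploit the decomposition $g=a+\delta b$: the even part $a$ contributes nothing (exactly as in the genuinely even case underlying Lemma \ref{evennormal}), while the perturbation $\delta b$ contributes a term of order $\delta$. Throughout, $m$ being odd is what makes the leading contribution vanish.

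First I would differentiate the integral formula (\ref{mvnsolnh}) $m$ times under the integral sign. Writing $H=h(g(\cdot))$, each differentiation brings down a factor $\mathrm{e}^{-s}$, so that
\begin{equation*}\frac{\partial^mf}{\prod_{j=1}^m\partial w_{i_j}}(\mathbf{w})=-\int_0^\infty \mathrm{e}^{-ms}\,\mathbb{E}\bigg[\frac{\partial^mH}{\prod_{j=1}^m\partial w_{i_j}}\big(\mathrm{e}^{-s}\mathbf{w}+\sqrt{1-\mathrm{e}^{-2s}}\Sigma^{1/2}\mathbf{Z}\big)\bigg]\,\mathrm{d}s.\end{equation*}
Replacing $\mathbf{w}$ by an independent copy $\Sigma^{1/2}\mathbf{Z}'$ of $\Sigma^{1/2}\mathbf{Z}$ and taking expectations, the inner argument $\mathrm{e}^{-s}\Sigma^{1/2}\mathbf{Z}'+\sqrt{1-\mathrm{e}^{-2s}}\Sigma^{1/2}\mathbf{Z}$ is, for every fixed $s$, distributed as $\mathrm{MVN}(\mathbf{0},\Sigma)$ (since $\mathrm{e}^{-2s}+(1-\mathrm{e}^{-2s})=1$), and in particular is symmetric.

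Next I would split the integrand using Lemma \ref{bell lem}, writing $\frac{\partial^m}{\prod}h(g)=\frac{\partial^m}{\prod}h(a)+r$ with $|r(\mathbf{w})|\le\delta\tilde{h}_mQ(\mathbf{w})$. Because $a$ is even, $h(a(\cdot))$ is even, and the $m$-th order partial derivative of an even function is odd when $m$ is odd; evaluated at the symmetric vector above, its expectation is therefore $0$ for every $s$. (Equivalently, this is the statement that the solution associated with the even test function $h(a(\cdot))$ has vanishing odd-order derivatives in expectation, by \cite{gaunt normal}, Lemma 3.2.) Hence only the remainder survives, and
\begin{equation*}\bigg|\mathbb{E}\bigg[\frac{\partial^mf}{\prod_{j=1}^m\partial w_{i_j}}(\Sigma^{1/2}\mathbf{Z})\bigg]\bigg|\le\delta\tilde{h}_m\int_0^\infty\mathrm{e}^{-ms}\,\mathbb{E}\big[Q\big(\mathrm{e}^{-s}\Sigma^{1/2}\mathbf{Z}'+\sqrt{1-\mathrm{e}^{-2s}}\Sigma^{1/2}\mathbf{Z}\big)\big]\,\mathrm{d}s.\end{equation*}

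Finally I would bound the inner expectation. With $Q(\mathbf{w})=A+\sum_iB_i|w_i|^{r_i}$, the crude inequality $|u+v|^{r_i}\le2^{r_i}(|u|^{r_i}+|v|^{r_i})$ together with $\mathrm{e}^{-sr_i}\le1$, $(1-\mathrm{e}^{-2s})^{r_i/2}\le1$ and $\mathbb{E}|(\Sigma^{1/2}\mathbf{Z}')_i|^{r_i}=\mathbb{E}|(\Sigma^{1/2}\mathbf{Z})_i|^{r_i}=\mathbb{E}|Z_i|^{r_i}$ gives the componentwise bound $2^{r_i+1}\mathbb{E}|Z_i|^{r_i}$; performing the elementary integral $\int_0^\infty\mathrm{e}^{-ms}\,\mathrm{d}s=1/m$ then yields exactly (\ref{onway2}). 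The main obstacle is not any single estimate but the justification of differentiating and interchanging expectation with the $s$-integral $m$ times: this requires the polynomial growth afforded by $g\in C_{Q,\delta}^m(\mathbb{R}^d)$ together with finiteness of the Gaussian moments $\mathbb{E}|Z_i|^{r_i}$, and a careful check that the even part's $m$-th derivative is genuinely odd (the sign $(-1)^m=-1$ for $m$ odd) so that it integrates to zero against the symmetric Gaussian.
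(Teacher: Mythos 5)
Your proposal is correct and follows essentially the same route as the paper: differentiate the semigroup representation (\ref{mvnsolnh}) under the integral, split via Lemma \ref{bell lem} into the even part $h(a(\cdot))$ (whose $m$-th order derivative, $m$ odd, has zero expectation against the symmetric Gaussian $\mathbf{z}_{s,\Sigma^{1/2}\mathbf{Z}}^{\Sigma^{1/2}\mathbf{Z}'}\stackrel{\mathcal{D}}{=}\Sigma^{1/2}\mathbf{Z}$) and the $O(\delta)$ remainder bounded by $\delta\tilde{h}_mQ$. The only cosmetic difference is that you bound $\int_0^\infty\mathrm{e}^{-ms}\mathbb{E}Q\,\mathrm{d}s$ directly via $|u+v|^{r_i}\leq2^{r_i}(|u|^{r_i}+|v|^{r_i})$ and $\int_0^\infty\mathrm{e}^{-ms}\,\mathrm{d}s=1/m$, whereas the paper quotes this estimate from Lemma 2.3 and Corollary 2.2 of \cite{gaunt normal}; both yield the identical constant $2^{r_i+1}$ in (\ref{onway2}).
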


\begin{proof}Let $\mathbf{z}_{s,\mathbf{w}}^{\Sigma^{1/2}\mathbf{Z}}=\mathrm{e}^{-s}\mathbf{w}+\sqrt{1-\mathrm{e}^{-2s}}\Sigma^{1/2}\mathbf{Z}$, where $\Sigma^{1/2}\mathbf{Z}$ is an independent copy of $\Sigma^{1/2}\mathbf{Z}\sim\mathrm{MVN}(\mathbf{0},\Sigma)$.  Then, by dominated convergence and Lemma \ref{bell lem}, 
\begin{align*}\frac{\partial^mf(\mathbf{w})}{\prod_{j=1}^m\partial w_{i_j}}&=-\int_0^{\infty}\mathrm{e}^{-ms}\mathbb{E}\bigg[\frac{\partial^m }{\prod_{j=1}^m\partial w_{i_j}}h(g(\mathbf{z}_{s,\mathbf{w}}^{\Sigma^{1/2}\mathbf{Z}'}))\bigg]\,\mathrm{d}s \\
&=-\int_0^{\infty}\mathrm{e}^{-ms}\mathbb{E}\bigg[\frac{\partial^m }{\prod_{j=1}^m\partial w_{i_j}}h(a(\mathbf{z}_{s,\mathbf{w}}^{\Sigma^{1/2}\mathbf{Z}'}))\bigg]\,\mathrm{d}s \\
&\quad-\delta \int_0^\infty \mathrm{e}^{-ms}\mathbb{E}q(\mathbf{z}_{s,\mathbf{w}}^{\Sigma^{1/2}\mathbf{Z}'})\,\mathrm{d}s,
\end{align*}
where $q$ is defined as per equation (\ref{hgdiff}).  Evaluating both sides at the random variable $\Sigma^{1/2}\mathbf{Z}$ and taking expectations gives that
\begin{align}\mathbb{E}\bigg[\frac{\partial^mf(\Sigma^{1/2}\mathbf{Z})}{\prod_{j=1}^m\partial w_{i_j}}\bigg]&=-\int_0^{\infty}\mathrm{e}^{-ms}\mathbb{E}\bigg[\frac{\partial^m }{\prod_{j=1}^m\partial w_{i_j}}h(a(\mathbf{z}_{s,\Sigma^{1/2}\mathbf{Z}}^{\Sigma^{1/2}\mathbf{Z}'}))\bigg]\,\mathrm{d}s\nonumber\\
\label{vanish}&\quad-\delta \int_0^\infty \mathrm{e}^{-ms}\mathbb{E}q(\mathbf{z}_{s,\Sigma^{1/2}\mathbf{Z}}^{\Sigma^{1/2}\mathbf{Z}'})\,\mathrm{d}s.
\end{align}
Now, $\mathbf{z}_{s,\Sigma^{1/2}\mathbf{Z}}^{\Sigma^{1/2}\mathbf{Z}'}=\mathrm{e}^{-s}\Sigma^{1/2}\mathbf{Z}+\sqrt{1-\mathrm{e}^{-2s}}\Sigma^{1/2}\mathbf{Z}'\stackrel{\mathcal{D}}{=}\Sigma^{1/2}\mathbf{Z}$, which is equal in distribution to $-\Sigma^{1/2}\mathbf{Z}$.  Since $a$ is an even function and $m$ is odd, it therefore follows that the first integral on the right-hand side of (\ref{vanish}) is equal to 0, and so
\begin{align*}\mathbb{E}\bigg[\frac{\partial^mf(\Sigma^{1/2}\mathbf{Z})}{\prod_{j=1}^m\partial w_{i_j}}\bigg]=-\delta \int_0^\infty \mathrm{e}^{-ms}\mathbb{E}q(\mathbf{z}_{s,\Sigma^{1/2}\mathbf{Z}}^{\Sigma^{1/2}\mathbf{Z}'})\,\mathrm{d}s,
\end{align*}
and thus, by (\ref{qbound}),
\begin{align}\label{abovee}\bigg|\mathbb{E}\bigg[\frac{\partial^mf(\Sigma^{1/2}\mathbf{Z})}{\prod_{j=1}^m\partial w_{i_j}}\bigg]\bigg|\leq\delta \tilde{h}_m\int_0^\infty \mathrm{e}^{-ms}\mathbb{E}Q(\mathbf{z}_{s,\Sigma^{1/2}\mathbf{Z}}^{\Sigma^{1/2}\mathbf{Z}'})\,\mathrm{d}s.
\end{align}
It was shown in \cite{gaunt normal} (see Lemma 2.3 and Corollary 2.2 of that work) that, for all $\mathbf{w}\in\mathbb{R}^d$,
\begin{equation*}\int_0^\infty \mathrm{e}^{-ms}\mathbb{E}Q(\mathbf{z}_{s,\Sigma^{1/2}\mathbf{Z}}^{\Sigma^{1/2}\mathbf{Z}'})\,\mathrm{d}s\leq \frac{1}{m}\bigg[A+B\sum_{i=1}^d2^{r_i}\big(|w_i|^{r_i}+\mathbb{E}|Z_i|^{r_i}\big)\bigg].
\end{equation*}
Applying this inequality to (\ref{abovee}) gives that
\begin{align*}\bigg|\mathbb{E}\bigg[\frac{\partial^mf(\Sigma^{1/2}\mathbf{Z})}{\prod_{j=1}^m\partial w_{i_j}}\bigg]\bigg|&\leq\delta \frac{\tilde{h}_m}{m}\bigg[A+B\sum_{i=1}^d2^{r_i}\big(\mathbb{E}|Z_i|^{r_i}+\mathbb{E}|Z_i|^{r_i}\big)\bigg]\\
&=\delta \frac{\tilde{h}_m}{m}\bigg[A+\sum_{i=1}^d2^{r_i+1}B_i\mathbb{E}|Z_i|^{r_i}\bigg],
\end{align*}
as required.
\end{proof}

\begin{theorem}\label{thmsec3}Let $X_{ij}$, $i=1,\ldots,n$, $j=1,\ldots,d$, be defined as in Lemma \ref{noteveng}, but with the additional assumption that $\mathbb{E}|X_{ij}|^{r_k+4}<\infty$ for all $i,$ $j$ and $1\leq k\leq d$.  Suppose $\Sigma$ is non-negative definite and that $g\in C_{Q,n^{-1/2}}^6(\mathbb{R}^d)$.  Then, for $h\in C_b^6(\mathbb{R})$,  
\begin{align*}&|\mathbb{E}h(g(\mathbf{W}))-\mathbb{E}h(g(\Sigma^{1/2}\mathbf{Z}))|\\
&\leq M+\frac{\tilde{h}_3}{6n^2}\sum_{i=1}^n\sum_{j,k,l=1}^d|\mathbb{E}X_{ij}X_{ik}X_{il}|\bigg[A+\sum_{t=1}^d2^{r_t+1}B_t\mathbb{E}|Z_t|^{r_t}\bigg],
\end{align*}
where $M$ is defined as in Theorem \ref{thmsec2}.
\end{theorem}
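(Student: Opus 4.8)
The plan is to rerun the proof of Lemma \ref{evennormal} essentially verbatim, changing only the one step where evenness of $g$ was previously used. The key observation is that the bound (\ref{nearlyeven}) was derived without ever invoking evenness: the decomposition into $N_1,N_2,R_1,\dots,R_4$ used only Taylor expansion and the mean-zero property $\mathbb{E}X_{ij}=0$ (indeed $N_2$ collapses to $R_4$ precisely because $\mathbb{E}X_{il}=0$, not because $g$ is even). Hence (\ref{nearlyeven}) holds for any $g$ of the form (\ref{gdelta}), and I would take it as the starting point. First I would bound the four remainders $|R_1|+|R_2|+|R_3|+|R_4|$ exactly as in Theorem \ref{thmsec2}: since $g=a+n^{-1/2}b\in C_{Q,n^{-1/2}}^6(\mathbb{R}^d)$ and $\delta=n^{-1/2}<1$, the partial derivatives of $g$ satisfy the polynomial-growth bounds defining $C_P^6(\mathbb{R}^d)$ with dominating function $Q$, so the fourth-order derivative estimates of Lemma \ref{cor28}, fed through Lemma \ref{cbwbshc}, apply unchanged and reproduce the $\frac{h_4}{24n^2}$ portion of $M$.

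Next I would treat the leading term $\frac{1}{2n^{3/2}}\sum_{i}\sum_{j,k,l}|\mathbb{E}X_{ij}X_{ik}X_{il}|\,\big|\mathbb{E}\frac{\partial^3 f}{\partial w_j\partial w_k\partial w_l}(\mathbf{W})\big|$. As in the even case I would introduce the centred Stein equation for $\psi_{jkl}$ and use the identity
\[
\mathbb{E}\frac{\partial^3 f}{\partial w_j\partial w_k\partial w_l}(\mathbf{W})=\mathbb{E}\frac{\partial^3 f}{\partial w_j\partial w_k\partial w_l}(\Sigma^{1/2}\mathbf{Z})+\mathbb{E}\big[\nabla^T\Sigma\nabla\psi_{jkl}(\mathbf{W})-\mathbf{W}^T\nabla\psi_{jkl}(\mathbf{W})\big],
\]
which is the analogue of (\ref{mpat}) without assuming that the target expectation vanishes. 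The Stein expression on the right is $O(n^{-1/2})$, bounded by applying Lemma \ref{noteveng} with $\psi_{jkl}$ in place of $f$ and then Lemma \ref{cbwbshc} via the estimate (\ref{sigmaiii2}); after multiplication by the $\frac{1}{2n^{3/2}}\sum|\mathbb{E}X_{ij}X_{ik}X_{il}|$ prefactor it contributes precisely the $\frac{h_6}{72n^3}$ portion of $M$, exactly as in Theorem \ref{thmsec2}.

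The one genuinely new term is $\mathbb{E}\frac{\partial^3 f}{\partial w_j\partial w_k\partial w_l}(\Sigma^{1/2}\mathbf{Z})$, which vanished in the even case but here does not. This is exactly the quantity controlled by (\ref{onway2}): taking $m=3$ and $\delta=n^{-1/2}$ gives $\big|\mathbb{E}\frac{\partial^3 f}{\partial w_j\partial w_k\partial w_l}(\Sigma^{1/2}\mathbf{Z})\big|\leq n^{-1/2}\frac{\tilde{h}_3}{3}\big[A+\sum_{t=1}^d 2^{r_t+1}B_t\mathbb{E}|Z_t|^{r_t}\big]$. Multiplying by the prefactor and collecting the two factors of $n^{-1/2}$ produces exactly the additional summand $\frac{\tilde{h}_3}{6n^2}\sum_{i}\sum_{j,k,l}|\mathbb{E}X_{ij}X_{ik}X_{il}|\big[A+\sum_{t=1}^d 2^{r_t+1}B_t\mathbb{E}|Z_t|^{r_t}\big]$ in the statement. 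Summing the three contributions yields $M$ plus this extra term, as claimed.

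I expect the only real subtlety, rather than a genuine obstacle, to be bookkeeping: checking that replacing the even $g$ by $g=a+n^{-1/2}b$ leaves undisturbed all the derivative bounds used to assemble $M$, so that the constants $A,B_t,r_t$ there are genuinely those of $Q$, and confirming $f\in C^4(\mathbb{R}^d)$ and $\psi_{jkl}\in C^3(\mathbb{R}^d)$ so that the expectations are legitimate. Since the $C_{Q,n^{-1/2}}^6$ conditions bound the relevant derivatives of $a$, of $b$, and of $b$ times the derivatives of $a$ all by $Q$, these verifications are routine, and the entire force of the argument is carried by the single estimate (\ref{onway2}) established in the preceding lemma.
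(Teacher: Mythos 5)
Your proposal is correct and follows essentially the same route as the paper's own proof: the paper likewise starts from the observation that the bound (\ref{nearlyeven}) in the proof of Lemma \ref{evennormal} holds without evenness, reuses the $\psi_{jkl}$ Stein-equation argument to reproduce the bound $M$ of Theorem \ref{thmsec2}, and bounds the one surviving term $\big|\mathbb{E}\frac{\partial^3 f}{\partial w_j\partial w_k\partial w_l}(\Sigma^{1/2}\mathbf{Z})\big|$ via inequality (\ref{onway2}) with $\delta=n^{-1/2}$, yielding exactly the extra $\frac{\tilde{h}_3}{6n^2}$ term. Your write-up simply makes explicit the decomposition that the paper's terser proof leaves implicit.
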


\begin{proof}Theorem \ref{thmsec2} was obtained by applying Lemma \ref{evennormal} together with the bounds of (\ref{cor28}).  Examining the proof of Lemma \ref{evennormal} (particularly equation (\ref{nearlyeven})), we see that we can obtain a bound for the quantity $|\mathbb{E}h(g(\mathbf{W}))-\mathbb{E}h(g(\Sigma^{1/2}\mathbf{Z}))|$ that is the bound $M$ of Theorem \ref{thmsec2} plus the additional term
\begin{equation}\label{onway}\frac{1}{2n^{3/2}}\sum_{i=1}^n\sum_{j,k,l=1}^d|\mathbb{E}X_{ij}X_{ik}X_{il}|\bigg|\mathbb{E}\frac{\partial^3f}{\partial w_j\partial w_k\partial w_l}(\Sigma^{1/2}\mathbf{Z})\bigg|,
\end{equation}
which arises because it is no longer assumed that $g$ is an even function.  Applying inequality (\ref{onway2}), with $\delta =n^{-1/2},$ to bound (\ref{onway}) yields the desired $O(n^{-1})$ bound.
\end{proof}

\section{Application to Freidman's chi-square and the power divergence statistics}

In this section, we consider the application of the approximation theorems of Section 2 to the statistics for complete block designs that were introduced in Section 1.1.

\subsection{Friedman's statistic}

We begin be observing the Friedman's statistic falls into the class of statistics covered by Theorem \ref{thmsec21}.  Let $W_j=\frac{1}{\sqrt{n}}\sum_{i=1}^n X_{ij}$,  where $X_{ij}=\frac{\sqrt{12}}{\sqrt{r(r+1)}}\big(\pi_i(j)-\frac{r+1}{2}\big)$, and under the null hypothesis, for fixed $j$, the collection of random variables $\pi_1(j),\ldots,\pi_n(j)$ are i.i.d$.$ with uniform distribution on $\{1,\ldots,r\}$.   Friedman's statistic is given by
\begin{equation*}F_r=\sum_{j=1}^rW_j^2,
\end{equation*}
and is asymptotically $\chi_{(r-1)}^2$ distributed under the null hypothesis.  

By the central limit theorem, for any $j$, we have that $W_j$ converges in distribution to a mean zero normal random variables as $n\rightarrow\infty$.  However, the $W_j$ are not independent (see Lemma \ref{robcov} for the (non-negative definite) covariance matrix $\Sigma_{\mathbf{W}}$), and we have that $\mathbf{W}=(W_1,\ldots,W_r)^T\stackrel{\mathcal{D}}{\rightarrow}\mathrm{MVN}(\mathbf{0},\Sigma_{\mathbf{W}})$ as $n\rightarrow\infty$.  However, for a fixed $j$, the random variables $X_{1,j},\ldots,X_{n,j}$ are independent because it is assumed that trials are independent.  Also, we can write $\chi^2=g(\mathbf{W})$, where $g(\mathbf{w})=\sum_{j=1}^mw_j^2$.  The function $g:\mathbb{R}^r\rightarrow\mathbb{R}$ is an even function with partial derivatives of polynomial growth.  Finally, since the $X_{ij}$ have finite support, their moments of any order exist.  Therefore, Friedman's statistic falls into the framework of Theorem \ref{thmsec2}.  However, since the distribution of the random variables $X_{ij}$ is symmetric about 0, it follows that $\mathbb{E}X_{ij}X_{ik}X_{il}=0$ for all $1\leq i\leq n$ and $1\leq j,k,l\leq d$.  Thus, we can in fact apply Theorem \ref{thmsec21} to bound the rate of convergence of Friedman's statistic.  We present an explicit bound in Theorem \ref{thm1}, but before stating the theorem we note the following lemma. 

\begin{lemma}\label{robcov} The (non-negative definite) covariance matrix of $\mathbf{W}$, denoted by $\Sigma_{\mathbf{W}} = (\sigma_{jk})$, has entries
\begin{equation*} \sigma_{jj} = \frac{r-1}{r} \quad \text{and} \quad \sigma_{jk} = -\frac{1}{r} \quad (j \neq k).
 \end{equation*}
\end{lemma}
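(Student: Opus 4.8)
The plan is to reduce the computation of $\Sigma_{\mathbf{W}}$ to a single-trial covariance and then exploit the exchangeability and the linear constraint built into a uniform random permutation. First I would observe that, since the trials are independent and each $X_{ij}$ has mean zero, the cross-trial terms vanish: for $i\neq i'$ we have $\mathbb{E}[X_{ij}X_{i'k}]=\mathbb{E}X_{ij}\,\mathbb{E}X_{i'k}=0$. Because $\mathbb{E}W_j=0$, the covariance equals the second moment, so
\begin{equation*}
\sigma_{jk}=\mathbb{E}[W_jW_k]=\frac{1}{n}\sum_{i=1}^n\mathbb{E}[X_{ij}X_{ik}]=\mathbb{E}[X_{1j}X_{1k}],
\end{equation*}
where the last equality uses that the $n$ trials are identically distributed. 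Thus everything reduces to the within-trial second moments $\mathbb{E}[X_{1j}X_{1k}]=\frac{12}{r(r+1)}\mathrm{Cov}(\pi_1(j),\pi_1(k))$.

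For the diagonal entries I would use that $\pi_1(j)$ is marginally uniform on $\{1,\ldots,r\}$, so its variance is $\frac{r^2-1}{12}$; substituting gives $\sigma_{jj}=\frac{12}{r(r+1)}\cdot\frac{r^2-1}{12}=\frac{r-1}{r}$. For the off-diagonal entries, rather than evaluate $\mathbb{E}[\pi_1(j)\pi_1(k)]$ directly (which would require summing $\sum_{a\neq b}ab$ over distinct values), I would exploit the deterministic constraint $\sum_{j=1}^r\pi_1(j)=\frac{r(r+1)}{2}$, which forces $\sum_{j=1}^rX_{1j}=0$ almost surely. Since the coordinates of a uniform random permutation are exchangeable, all diagonal entries of $\Sigma_{\mathbf{W}}$ share the value $\frac{r-1}{r}$ and all off-diagonal entries share a common value, say $\sigma$. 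Taking the variance of the identity $\sum_{j=1}^rX_{1j}=0$ then gives
\begin{equation*}
0=\mathrm{Var}\Big(\sum_{j=1}^rX_{1j}\Big)=r\cdot\frac{r-1}{r}+r(r-1)\sigma,
\end{equation*}
whence $\sigma=-\frac{1}{r}$, as claimed.

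Finally, the parenthetical assertion that $\Sigma_{\mathbf{W}}$ is non-negative definite is automatic, being the covariance matrix of a random vector; I would add the remark that the constraint $\sum_{j=1}^rX_{1j}=0$ in fact makes $\Sigma_{\mathbf{W}}$ singular of rank $r-1$, consistent with the $\chi^2_{(r-1)}$ limit. I do not anticipate any genuine obstacle in this proof: the only points meriting a little care are the bookkeeping that collapses the $n$-trial covariance to a single-trial one, and the decision to deploy the linear constraint $\sum_{j=1}^rX_{1j}=0$ in place of a direct moment computation, which keeps the off-diagonal calculation short.
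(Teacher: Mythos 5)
Your proof is correct and takes essentially the same approach as the paper: the diagonal entries come from the variance of a discrete uniform on $\{1,\ldots,r\}$, and the off-diagonal entries are obtained from the deterministic zero-sum constraint together with exchangeability, rather than by a direct moment computation. The only cosmetic difference is that you reduce to a single trial and expand $\mathrm{Var}\big(\sum_{j=1}^r X_{1j}\big)=0$, whereas the paper works directly with $\sum_{j=1}^r W_j=0$ and expands $\mathbb{E}\big[W_j\sum_{l=1}^r W_l\big]=0$; the two computations are interchangeable.
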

\begin{proof}For fixed $j$, $\pi_1(j),\ldots,\pi_n(j)$ are independent $\mathrm{Unif}\{1,\ldots,r\}$ random variables, and therefore
\[\sigma_{jj}=\mathrm{Var}W_j=\frac{12}{r(r+1)n}\sum_{i=1}^n\mathrm{Var}\pi_i(j)=\frac{12}{r(r+1)n}\times n\times\frac{r^2-1}{12}=\frac{r-1}{r}.\]
Suppose now that $j\not=k$.  Since $\sum_{j=1}^rW_j=0$, we have
\[0=\mathbb{E}\bigg[W_j\sum_{l=1}^rW_l\bigg]=\mathbb{E}W_j^2+\sum_{l\not= j}\mathbb{E}W_jW_l=\mathbb{E}W_j^2+(r-1)\mathbb{E}W_jW_k,\]
where we used that the $W_j$ are identically distributed to obtain the final equality.  On rearranging, and using that $\mathbb{E}W_j^2=\frac{r-1}{r}$, we have that $\mathbb{E}W_jW_k=-\frac{1}{r}$ for $j\not=k$.  As $\mathbb{E}W_j=0$, it follows that $\sigma_{jk}=\mathrm{Cov}(W_j,W_k)=\mathbb{E}W_jW_k=-\frac{1}{r}$, for $j\not=k$, as required.
\end{proof}

\begin{theorem}\label{thm1}Suppose $r\geq 2$.  Then, for $h\in C_b^4(\mathbb{R}^+)$, 
\begin{equation}\label{boundthm1}|\mathbb{E}h(F_r)-\chi_{(r-1)}^2h|\leq 10797r^5n^{-1}h_4,
\end{equation}
where $\chi_{(r-1)}^2h$ denotes the expectation of $h(Y)$ for $Y\sim \chi_{(r-1)}^2$.
\end{theorem}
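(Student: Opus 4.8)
The plan is to show that Friedman's statistic satisfies the hypotheses of Theorem \ref{thmsec21} and then to track the resulting bound explicitly. Writing $F_r=g(\mathbf{W})$ with $g(\mathbf{w})=\sum_{j=1}^r w_j^2$, the function $g$ is even, its only nonvanishing derivatives are $\partial g/\partial w_j=2w_j$ and $\partial^2 g/(\partial w_j\partial w_k)$, which equals $2$ if $j=k$ and $0$ otherwise, and all higher derivatives vanish. Since $(2|w_j|)^4=16|w_j|^4$ and $2^{4/2}=4$, we have $g\in C_P^4(\mathbb{R}^r)$ with dominating function $P(\mathbf{w})=4+16\sum_{i=1}^r|w_i|^4$, i.e.\ $A=4$, $B_i=16$, $r_i=4$. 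The remaining hypotheses of Theorem \ref{thmsec21} hold: the $X_{ij}$ have finite support, so all moments exist (in particular $\mathbb{E}|X_{ij}|^{8}<\infty$); by Lemma \ref{robcov} the covariance matrix $\Sigma_{\mathbf{W}}$ is non-negative definite; and since the law of $X_{ij}$ is symmetric about $0$, $\mathbb{E}X_{ij}X_{ik}X_{il}=0$ for all $i,j,k,l$. Finally, $\Sigma_{\mathbf{W}}=I-\tfrac1r J$, with $J$ the all-ones matrix, is a rank-$(r-1)$ projection, so $g(\Sigma^{1/2}\mathbf{Z})=\mathbf{Z}^T\Sigma_{\mathbf{W}}\mathbf{Z}\sim\chi^2_{(r-1)}$ and hence $\mathbb{E}h(g(\Sigma^{1/2}\mathbf{Z}))=\chi^2_{(r-1)}h$.

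With these identifications I would substitute into the bound (\ref{boundtheorem}) of Theorem \ref{thmsec21} and estimate every moment that appears. The low-order quantities are computed exactly from Lemma \ref{robcov}: $\mathbb{E}X_{ij}^2=\sigma_{jj}=(r-1)/r$, $|\mathbb{E}X_{ij}X_{ik}|=1/r$ for $j\neq k$, and $\mathbb{E}Z_t^4=3\sigma_{tt}^2=3((r-1)/r)^2<3$. For the higher mixed moments I would use the uniform bound $|X_{ij}|<\sqrt3$ together with the weighted AM--GM inequality to reduce products over distinct coordinates to single-coordinate moments, e.g.\ $\mathbb{E}|X_{ij}X_{ik}X_{il}X_{im}|\le\mathbb{E}X_{i1}^4$ and $\mathbb{E}|X_{ij}X_{ik}X_{il}X_{im}X_{it}^4|\le\mathbb{E}X_{i1}^8$; here $\mathbb{E}X_{i1}^4=\tfrac{3(r-1)(3r^2-7)}{5r^2(r+1)}<\tfrac95$ and $\mathbb{E}X_{i1}^8$ are read off from the fourth and eighth central moments of the discrete uniform law. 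Finally $\mathbb{E}W_t^4=\tfrac1n\mathbb{E}X_{i1}^4+\tfrac{3(n-1)}{n}(\mathbb{E}X_{i1}^2)^2\le\mathbb{E}X_{i1}^4+3(\mathbb{E}X_{i1}^2)^2<\tfrac95+3$, which is the key point that keeps this quantity bounded uniformly in $n$.

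It then remains to carry out the index summations in (\ref{boundtheorem}). As the summands do not depend on $i$, the sum $\sum_{i=1}^n$ produces a factor $n$ that cancels one power of $n^{-2}$, leaving the desired $n^{-1}$; any summand carrying the extra decay $n^{-r_t/2}=n^{-2}$ is absorbed using $n\ge1$. The crucial bookkeeping is the power of $r$: the block $\sum_{j,k,l,m=1}^r\sum_{t=1}^r(\cdots)$ arising from the dominating function contributes $r^5$ terms of equal order and is the dominant one, whereas the block multiplied by $|\mathbb{E}X_{ij}X_{ik}|$ is only $O(r^4)$, because $\sum_{j,k=1}^r|\mathbb{E}X_{ij}X_{ik}|=r\cdot\tfrac{r-1}{r}+r(r-1)\cdot\tfrac1r=2(r-1)$ telescopes to $O(r)$. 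Collecting the numerical constants from the moment bounds above yields an estimate of the form $Cr^5n^{-1}h_4$.

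I expect the main obstacle to be precisely this constant-tracking rather than any conceptual difficulty, since Theorem \ref{thmsec21} already supplies the $O(n^{-1})$ structure. A naive use of $|X_{ij}|<\sqrt3$ in every moment overshoots the target coefficient by roughly an order of magnitude, so one must retain the exact second and fourth moments (and the genuinely $O(1)$ bound on $\mathbb{E}W_t^4$) to bring the constant down to $10797$, while simultaneously grouping the index sums so that the growth in $r$ is no worse than $r^5$.
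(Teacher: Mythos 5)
Your proposal is the same proof as the paper's: verify that $F_r$ falls under Theorem \ref{thmsec21} (even $g$, vanishing third moments by symmetry of the $X_{ij}$, finite support, non-negative definite $\Sigma_{\mathbf{W}}$ from Lemma \ref{robcov}), take the dominating function $P(\mathbf{w})=4+16\sum_{j}w_j^4$ with $A=4$, $B_i=16$, $r_i=4$, bound the moments appearing in (\ref{boundtheorem}), and absorb the index sums into powers of $r$. Your low-order moment computations, the formula for $\mathbb{E}W_t^4$, and the bound $\mathbb{E}Z_t^4<3$ coincide with the paper's, and your explicit remark that $\Sigma_{\mathbf{W}}=I-\frac{1}{r}J$ is a rank-$(r-1)$ projection, so that $g(\Sigma^{1/2}\mathbf{Z})\sim\chi^2_{(r-1)}$, makes precise a step the paper leaves implicit.

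The place where your proposal does not close the stated inequality is the final constant. Your mixed-moment bounds, $\mathbb{E}|X_{ij}X_{ik}X_{il}X_{im}|\le\mathbb{E}X^4\le 9/5$, $\mathbb{E}|X_{il}X_{im}X_{it}^4|\le\mathbb{E}X^6\le 27/7$ and $\mathbb{E}|X_{ij}X_{ik}X_{il}X_{im}X_{it}^4|\le\mathbb{E}X^8\le 9$, are the correct consequences of AM--GM/H\"older for identically distributed coordinates. The paper, however, plugs in $(\mathbb{E}X^4)^{1/4}$, $(\mathbb{E}X^6)^{1/6}$ and $(\mathbb{E}X^8)^{1/8}$, i.e.\ it retains fractional exponents (for four identically distributed factors H\"older gives $\prod_{s=1}^4(\mathbb{E}X^4)^{1/4}=\mathbb{E}X^4$, so your version is the right one), and it is exactly these smaller numbers that produce $259123/24\approx 10797$ in the paper's final display. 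Rerunning that display with your bounds gives a bracket of roughly $4\times 10^5$, hence a constant near $1.7\times 10^4$; and your sharper treatment of the $|\mathbb{E}X_{ij}X_{ik}|$ block via $\sum_{j,k}|\mathbb{E}X_{ij}X_{ik}|=2(r-1)$ does not recover the loss, since converting the resulting $r^4$ term back into the required $r^5$ form costs a factor of $r/2$, which is tight at $r=2$. So, as written, your argument proves $|\mathbb{E}h(F_r)-\chi^2_{(r-1)}h|\le Cr^5n^{-1}h_4$ with $C\approx 1.7\times 10^4$, not $C=10797$; to assert the constant $10797$ you would need genuinely sharper estimates for the mixed moments than the ones you list (and you should be aware that the paper's own value of $10797$ rests on the questionable fractional-exponent step just described).
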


\begin{remark}

\begin{enumerate}

\item The bound (\ref{boundthm1}) is of order $n^{-1}$, which is the fastest rate of convergence in the literature for Friedman's statistic.  However, the numerical constants and the dependence of the bound on $r$ are far from optimal.  This is the price we pay for deriving the bound by applying the more general bound of Theorem \ref{thmsec21}.  In proving Theorem \ref{thmsec21}, we used local couplings to deal with the dependence structure, and this approach enabled us to obtain a $O(n^{-1})$ bound for a class of statistics for complete block designs.  However, for Friedman's statistic, local couplings are quite crude, and their use leads to a bound with a poor dependence on $r$.  A direction for future research is to obtain a $O(n^{-1})$ bound with a better dependence on $r$. 

\item As using the Theorem \ref{thmsec21} to obtain a $O(n^{-1})$ bound for Friedman's statistic will lead to one with a far from optimal dependence on $r$ and large numerical constants, we make use of a number of crude inequalities to derive when applying Theorem \ref{thmsec21} to derive our bound.  This simplifies the calculations, but still allows us to obtain the desired $O(n^{-1})$ rate.

\item We could also apply Theorem \ref{theoremsec21} to derive a $O(r^3n^{-1/2})$ bound for Friedman's statistic, which may be preferable when the numerical constants are large compared to $n$.  Note that we cannot apply Theorem \ref{theoremsec22} because the covariance matrix $\Sigma_{\mathbf{S}}$ is not positive-definite. 

\end{enumerate}

\end{remark}  

\medskip
\noindent{\emph{Proof of Theorem \ref{thm1}.}}
As described above, Friedman's statistic falls into the framework of Theorem \ref{thmsec21}, so to derive the bound we need to find a suitable dominating function for $g(\mathbf{w})=\sum_{j=1}^mw_j^2$ and then bound the expectations that appear in the bound (\ref{boundtheorem}).  For all $k=1,\ldots,d$ we have that $\frac{\partial g(\mathbf{w})}{\partial w_k}=2w_k$, $\frac{\partial^2g(\mathbf{w})}{\partial w_k^2}(\mathbf{w})=2$ and all other derivatives are equal to 0.  Now, $\big|\frac{\partial g(\mathbf{w})}{\partial w_k}\big|^4=16w_k^4$ and $\big|\frac{\partial^2 g(\mathbf{w})}{\partial w_k^2}\big|^2=4$, meaning that we can take $P(\mathbf{w})=4+16\sum_{j=1}^rw_j^4$ as our dominating function.  We therefore apply the bound (\ref{boundtheorem}) with $A=4$, $B_1=\ldots=B_r=16$ and $r_1=\ldots =r_r=4$.

Now, let $X$ be a random variable that has the same distribution as the $X_{ij}$.  We shall need some formulas for the moments of $X$, which can be computed from the following sum:
\begin{equation*}\mathbb{E}X^m=\bigg(\frac{12}{r(r+1)}\bigg)^{m/2}\sum_{j=1}^r\bigg(j-\frac{r+1}{2}\bigg)^m.
\end{equation*}
In particular,
\begin{align*}\mathbb{E}X^2&=\frac{r-1}{r}\leq1, \quad\quad \mathbb{E}X^4=\frac{144}{r^2(r+1)^2}\cdot\frac{1}{240}(r^2-1)(3r^2-7)\leq\frac{9}{5}, \\
\mathbb{E}X^6&=\frac{12^3}{r^3(r+1)^3}\cdot\frac{1}{1344}(r^2-1)(3r^4-18r^2+31)\leq\frac{27}{7}, \\
\mathbb{E}X^8&=\frac{12^4}{r^4(r+1)^4}\cdot\frac{1}{33792}(r^2-1)(r^2-5)(3r^6-37r^4+225r^2-511)\leq 9.
\end{align*}
With these inequalities and H\"{o}lder's inequality we can bound the following terms that arise in (\ref{boundtheorem}) for all $1\leq i\leq n$ and $1\leq j,k,l,m,t\leq d$:
\begin{eqnarray*}|\mathbb{E}X_{ij}X_{ik}|\leq\mathbb{E}|X_{ij}X_{ik}|&\leq& (\mathbb{E}X^2)^{1/2}\leq1, \\
\mathbb{E}|X_{ij}X_{ik}X_{il}X_{im}|&\leq&(\mathbb{E}X^4)^{1/4}\leq(9/5)^{1/4}, \\
\mathbb{E}|X_{ij}X_{ik}X_{il}^4|&\leq&(\mathbb{E}X^6)^{1/6}\leq(27/7)^{1/6}, \\
\mathbb{E}|X_{ij}X_{ik}X_{il}X_{im}X_{it}^4|&\leq&(\mathbb{E}X^8)^{1/8}\leq 9^{1/8}.
\end{eqnarray*}
We also have that, for all $1\leq j\leq d$,
\begin{align*}\mathbb{E}W_j^4=\frac{3(n-1)}{n}(\mathbb{E}X^2)^2+\mathbb{E}X^4\leq 3+\frac{9}{5n}\leq\frac{24}{5},
\end{align*}
as $\mathbb{E}X=0$ and $n\geq1$.  Finally, $\mathbb{E}Z_j^4=3\big(\frac{r-1}{r}\big)^2<3$ for $Z_j\sim N(0,\sigma_{jj})$.  Plugging these inequalities into the bound (\ref{boundtheorem}) and simplifying using that $r,n\geq1$ gives that
\begin{align*}&|\mathbb{E}h(F_r)-\chi_{(r-1)}^2h|\\
&\leq\frac{r^4h_4}{24n}\bigg[4\bigg(\frac{9}{5}\bigg)^{1/4}+2^8r\bigg(16\cdot\bigg(\frac{9}{5}\bigg)^{1/4}\cdot\frac{24}{5}+\frac{16\cdot 9^{1/8}}{n^2}+3\cdot\bigg(\frac{9}{5}\bigg)^{1/4}\bigg)\\
&\quad+9\bigg(4+2^8r\bigg(16\cdot\frac{24}{5}+\frac{16}{n^2}\bigg(\frac{27}{7}\bigg)^{1/6}+3\bigg)\bigg)\bigg] \\
&\leq 10797r^5n^{-1}h_4,
\end{align*} 
which is our final bound. \hfill $\square$

\subsection{Pearson's statistic}

Here, we consider the application of Theorem \ref{thmsec2} to Pearson's statistic.  Recall that Pearson's statistic is given by
\begin{equation*} \chi^2 = \sum_{j=1}^r \frac{(U_j - n p_j)^2}{n p_j}, 
\end{equation*}
and is asymptotically $\chi_{(r-1)}^2$ distributed provided $np_*\rightarrow\infty$, where $p_*=\min_{1\leq j\leq r}p_j$.  The cell counts $U_j\sim\mathrm{Bin}(n,p_j)$, $1\leq j\leq r$, are dependent random variables that satisfy $\sum_{j=1}^rU_j=n$.  Now, let $I_{ij}\sim \mathrm{Ber}(p_j)$ denote the indicator that the $i$-th trial falls in the $j$-th cell.  Then letting $X_{ij}=\frac{I_{ij}-p_j}{\sqrt{p_j}}$ and $W_j=\frac{1}{\sqrt{n}}\sum_{i=1}^nX_{ij}$, we can write
\begin{equation*}\chi^2=g(\mathbf{W})=\sum_{j=1}^rW_j^2,
\end{equation*}
where $g(\mathbf{w})=\sum_{j=1}^rw_j^2$.  As was the case for Friedman's statistic, the function $g$ is even and has derivatives of polynomial growth.  Also, because trials are assumed to be independent, the random variables $X_{1,j},\ldots,X_{n,j}$ are independent for fixed $j$.  The covariance matrix of $\mathbf{W}=(W_1,\ldots,W_r)^T$ is non-negative definite, with entries $\sigma_{jj}=1-p_j$ and $\sigma_{jk}=-\sqrt{p_jp_k}$, $j\not=k$ (see \cite{gaunt chi square}).  It is therefore the case that Pearson's statistics falls within the class of statistics covered by Theorem \ref{thmsec2}.  However, unlike, Friedman's statistic, we cannot apply Theorem \ref{thmsec21} to Pearson's statistic.  This is because $\mathbb{E}X_{ij}X_{ik}X_{il}\not=0$ in general.

We can apply Theorem \ref{thmsec2} to Pearson's statistic as follows.  As was the case for Friedman's statistic, for all $k=1,\ldots,d$ we have that $\frac{\partial g(\mathbf{w})}{\partial w_k}=2w_k$, $\frac{\partial^2g(\mathbf{w})}{\partial w_k^2}(\mathbf{w})=2$ and all other derivatives are equal to 0.   However, because we are applying Theorem \ref{thmsec2} instead of Theorem \ref{thmsec21}, we need a different dominating function.  We have that $\big|\frac{\partial g(\mathbf{w})}{\partial w_k}\big|^6=64w_k^6$ and $\big|\frac{\partial^2 g(\mathbf{w})}{\partial w_k^2}\big|^3=8$, meaning that we can take $P(\mathbf{w})=8+64\sum_{j=1}^rw_j^6$ as our dominating function.  We can therefore bound the quantity $|\mathbb{E}h(\chi^2)-\chi_{(r-1)}^2|$ by applying   the bound (\ref{boundthm2}) with $A=8$, $B_1=\ldots=B_r=64$ and $r_1=\ldots =r_r=6$.  We do not compute this bound, but note that we can obtain one of the form
\begin{equation}\label{formchi}|\mathbb{E}h(\chi^2)-\chi_{(r-1)}^2|\leq Cn^{-1}(h_4+h_6),
\end{equation}
where $C$ is a constant depending on $r$ and $p_1,\ldots,p_r$, but not $n$.  We do not explicitly find such a $C$ because a superior upper bound of the form $Km(np_*)^{-1}\sum_{k=1}^5\|h^{(k)}\|$ has already been obtained by \cite{gaunt chi square}.  This bound holds for a weaker class of test functions than (\ref{formchi}) and has a much better dependence on $r$ and $p_1,\ldots,p_r$ than a bound that would result from an application of Theorem \ref{thmsec2}.  That the bound of \cite{gaunt chi square} outperforms ours is perhaps to be expected, given that their approach was target specifically at Pearson's statistic rather than the general class of statistics that are considered in this paper.  

\subsection{The power divergence family of statistics}

Recall that the power divergence statistic with index $\lambda\in\mathbb{R}$ is given by
\begin{equation}\label{cbshbac}T_\lambda=\frac{2}{\lambda(\lambda+1)}\sum_{j=1}^rU_j\bigg[\bigg(\frac{U_j}{np_j}\bigg)^\lambda-1\bigg].
\end{equation}
Letting $\mathbf{W}$ be defined as in Section 3.2, we can write (\ref{cbshbac}) as
\begin{equation}\label{geqndef}T_\lambda(\mathbf{W})=g(\mathbf{W})=\frac{2}{\lambda(\lambda+1)}\bigg[\sum_{j=1}^rnp_j\bigg(1+\frac{W_j}{\sqrt{np_j}}\bigg)^{\lambda+1}-n\bigg],
\end{equation}
since $W_j=\frac{U_j-np_j}{\sqrt{np_j}}$ and $\sum_{j=1}^rU_j=n$.  For general $\lambda$, the function $g$, defined as per equation (\ref{geqndef}), is not an even function; the notable exception being the case $\lambda=1$, which  corresponds to Pearson's statistic.  Therefore, for $\lambda\not=1$, we cannot apply Theorems \ref{thmsec2} and \ref{thmsec21} to obtain $O(n^{-1})$ bounds for the rate of convergence of the statistic $T_\lambda$ to its limiting $\chi_{(r-1)}^2$ distribution.  However, for certain values of $\lambda$, the statistic $T_\lambda$ falls into the class of statistics covered by Theorem \ref{thmsec3}.  Let us now see why this is the case.  We can write
\begin{equation}\label{tformula}T_\lambda(\mathbf{W})=\sum_{j=1}^rW_j^2+R(\mathbf{W}),
\end{equation}
where
\begin{align*}R(\mathbf{W})&=\frac{2}{\lambda(\lambda+1)}\sum_{j=1}^r\bigg[np_j\bigg(1+\frac{W_j}{\sqrt{np_j}}\bigg)^{\lambda+1}\\
&\quad-np_j-(\lambda+1)\sqrt{np_j}W_j-\frac{\lambda(\lambda+1)}{2}W_j^2\bigg],
\end{align*}
as $\sum_{j=1}^rp_j=1$ and $\sum_{j=1}^r\sqrt{p_j}W_j=\sum_{j=1}^r(U_j-np_j)=0$.  Equation (\ref{tformula}) tells us that $T_\lambda(\mathbf{w})$ is of the form (\ref{gdelta}), in the sense that it can be expressed as the sum of an even term $\sum_{j=1}^rw_j^2$ (which corresponds to Pearson's statistic) and a `small' term $R(\mathbf{w})$.  We can argue informally to see that this term is small.  Recall the binomial series formula $(1+x)^{\alpha}=\sum_{k=0}^\infty\frac{(-\alpha)_k}{k!}(-x)^k$ which is valid for $|x|<1$, and the Pochhammer symbol is defined by $(\beta)_n=\beta(\beta+1)\cdots(\beta+n-1)$.  Then, provided $|w_j|<\sqrt{np_j}$ for all $1\leq j\leq r$, we can use the binomial series formula to obtain that
\begin{align}R(\mathbf{w})&=\frac{2}{\lambda(\lambda+1)}\sum_{j=1}^rnp_j\bigg[\sum_{k=0}^{\infty}\frac{(-\lambda-1)_k}{k!}\bigg(\frac{-w_j}{\sqrt{np_j}}\bigg)^k\nonumber\\
&\quad-1-\frac{\lambda+1}{\sqrt{np_j}}w_j-\frac{\lambda(\lambda+1)}{2}\frac{w_j^2}{np_j}\bigg]\nonumber \\
\label{seriesexp}&=\frac{2}{\lambda(\lambda+1)}\sum_{j=1}^r\sum_{k=3}^\infty\frac{(-\lambda-1)_k(-w_j)^k}{k!(np_j)^{k/2-1}},
\end{align}
which is $O(n^{-1/2})$ as $n\rightarrow\infty$.  Whilst the series expansion (\ref{seriesexp}) shows that $R(\mathbf{w})$ is $O(n^{-1/2})$, provided $|w_j|<\sqrt{np_j}$ for all $1\leq j\leq r$, we still need to argue carefully to apply Theorem \ref{thmsec3} to obtain a bound for the rate of convergence of $T_\lambda$; in fact, as we shall now see, the theorem cannot be directly applied for all $\lambda\in\mathbb{R}$.  We shall now prove the following theorem and end by discussing the corresponding conjecture.

\begin{theorem}\label{thmpd}Let $r\geq 2$ and suppose that $\lambda$ is a positive integer or any real number greater than 5.  Then, for $h\in C_b^6(\mathbb{R}^+)$, 
\begin{equation}\label{boundpds}|\mathbb{E}h(T_\lambda(\mathbf{W}))-\chi_{(r-1)}^2h|\leq C(\lambda,p_1,\ldots,p_r)r^7n^{-1}(h_6+\tilde{h}_3),
\end{equation}
where $C(\lambda,p_1,\ldots,p_r)$ is a constant involving $\lambda$ and $p_1,\ldots,p_r$, but not $n$ and $r$.
\end{theorem}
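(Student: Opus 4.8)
The plan is to place $T_\lambda$ in the framework of Theorem \ref{thmsec3} via the decomposition \eqref{tformula}, namely $T_\lambda(\mathbf{w})=\sum_{j=1}^r w_j^2+R(\mathbf{w})=a(\mathbf{w})+R(\mathbf{w})$, where $a(\mathbf{w})=\sum_{j=1}^r w_j^2$ is even and $R$ is the $O(n^{-1/2})$ remainder in \eqref{seriesexp}. Setting $R=n^{-1/2}b$ with $b(\mathbf{w})=n^{1/2}R(\mathbf{w})$, I would verify that $g=a+n^{-1/2}b$ lies in $C_{Q,n^{-1/2}}^6(\mathbb{R}^d)$ for a dominating function $Q(\mathbf{w})=A+\sum_i B_i|w_i|^{r_i}$ whose constants $A,B_i$ and exponents $r_i$ depend on $\lambda$ and $p_1,\ldots,p_r$ but not on $n$. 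The point of the scaling is that, by \eqref{seriesexp}, the leading ($k=3$) term of $b$ is the $n$-free cubic $\tfrac{\lambda-1}{3}\sum_j w_j^3/\sqrt{p_j}$, while every higher term carries a strictly negative power of $n$; hence all coefficients are bounded uniformly in $n\ge 1$, and differentiating up to order six yields the polynomial growth demanded by $Q$.

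The two cases in the hypothesis reflect exactly whether $g$ is globally defined and six-times differentiable. When $\lambda$ is a positive integer, $(1+w_j/\sqrt{np_j})^{\lambda+1}$ is a polynomial, so $g$ is a polynomial on all of $\mathbb{R}^d$ and the membership $g\in C_{Q,n^{-1/2}}^6(\mathbb{R}^d)$ is immediate. When $\lambda>5$ is non-integer, $g$ is only defined, and only six-times continuously differentiable, on the region $\{w_j>-\sqrt{np_j}\}$: the $k$-th derivative of the $j$-th summand is a multiple of $(1+w_j/\sqrt{np_j})^{\lambda+1-k}$, which stays continuous up to the boundary $w_j=-\sqrt{np_j}$ precisely when $\lambda+1-k\ge 0$, i.e. for all $k\le 6$ exactly when $\lambda>5$ (the threshold value being already covered by the integer case). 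I would therefore keep $a(\mathbf{w})=\sum_j w_j^2$ as the even part and replace $b$ by a $C^6$ extension $b^\ast$ of polynomial growth across each hyperplane $w_j=-\sqrt{np_j}$ (for instance by matching the six boundary derivatives with a polynomial), obtaining $g^\ast=a+n^{-1/2}b^\ast\in C_{Q,n^{-1/2}}^6(\mathbb{R}^d)$. Since the cell counts satisfy $U_j\ge 0$, i.e. $1+W_j/\sqrt{np_j}=U_j/(np_j)\ge 0$ almost surely, we have $g^\ast(\mathbf{W})=T_\lambda(\mathbf{W})$ a.s., so Theorem \ref{thmsec3} applied to $g^\ast$ controls $|\mathbb{E}h(T_\lambda(\mathbf{W}))-\mathbb{E}h(g^\ast(\Sigma^{1/2}\mathbf{Z}))|$ at order $n^{-1}$ (the internal use of \eqref{onway2} with $\delta=n^{-1/2}$ being what secures that order).

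It then remains to pass from $\mathbb{E}h(g^\ast(\Sigma^{1/2}\mathbf{Z}))$ to $\chi_{(r-1)}^2 h=\mathbb{E}h(a(\Sigma^{1/2}\mathbf{Z}))$, since $g^\ast(\Sigma^{1/2}\mathbf{Z})$ is not exactly $\chi_{(r-1)}^2$ distributed. A crude mean-value estimate only gives $O(n^{-1/2})$, so I would instead exploit symmetry, and I expect this to be the \emph{main obstacle}, because the cancellation must be made uniform while the domain of $g$ shrinks with $n$. Splitting on $G=\{(\Sigma^{1/2}\mathbf{Z})_j>-\sqrt{np_j}\ \forall j\}$, the complement has probability decaying faster than any power of $n$ (Gaussian tails, as $\sigma_{jj}=1-p_j\le 1$), and there the integrand is dominated by $\|h'\|$ times a polynomial with finite Gaussian moments, contributing $O(n^{-1})$. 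On $G$ we have $g^\ast=a+R$, and Taylor expansion gives $h(a+R)-h(a)=h'(a)R+\tfrac12 h''(\xi)R^2$, with the quadratic term $O(n^{-1})$ because $\mathbb{E}[R(\Sigma^{1/2}\mathbf{Z})^2\mathbf{1}_G]=O(n^{-1})$. For the linear term I write $R=R_3+R_{\ge 4}$ with $R_3(\mathbf{w})=\tfrac{\lambda-1}{3}\sum_j w_j^3/\sqrt{np_j}$; since $a$ is even, $R_3$ is odd and $\Sigma^{1/2}\mathbf{Z}\stackrel{\mathcal{D}}{=}-\Sigma^{1/2}\mathbf{Z}$, the global expectation $\mathbb{E}[h'(a)R_3]$ vanishes, so $\mathbb{E}[h'(a)R_3\mathbf{1}_G]=-\mathbb{E}[h'(a)R_3\mathbf{1}_{G^c}]$ is again exponentially small, while $R_{\ge 4}$ carries a factor $n^{-1}$, giving $\mathbb{E}[h'(a)R_{\ge 4}\mathbf{1}_G]=O(n^{-1})$. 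This odd/even cancellation of the cubic term is what upgrades the rate from $n^{-1/2}$ to $n^{-1}$.

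Finally I would assemble the result by the triangle inequality, adding the $O(n^{-1})$ output of Theorem \ref{thmsec3} to the $O(n^{-1})$ chi-square-comparison term. Bounding the finitely many moments $\mathbb{E}|W_t|^{r_t}$, $\mathbb{E}|Z_t|^{r_t}$ and the mixed moments of $X_{ij}=(I_{ij}-p_j)/\sqrt{p_j}$ produces the $\lambda$- and $p$-dependent constant, the only $h$-dependence entering through $h_6$ (which dominates $h_4$) and through $\tilde{h}_3$. Tracking the powers of $r$ through the iterated sums over $j,k,l,m,t\in\{1,\ldots,r\}$ in \eqref{boundthm2} — where the double block $\sum_{a,b,c}\sum_{j,k,l}\sum_t$ contributes the dominant $r^7$ — and absorbing everything else into $C(\lambda,p_1,\ldots,p_r)$ yields the stated bound $C(\lambda,p_1,\ldots,p_r)\,r^7 n^{-1}(h_6+\tilde{h}_3)$.
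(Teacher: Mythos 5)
Your proposal is correct and its core is the same as the paper's: the decomposition $T_\lambda=a+n^{-1/2}b$ with $a(\mathbf{w})=\sum_{j}w_j^2$, the same case split ($\lambda\in\mathbb{Z}^+$ makes $b$ a global polynomial with $O(1)$ coefficients; non-integer $\lambda>5$ keeps all derivatives of $b$ up to order six continuous at the hyperplanes $w_k=-\sqrt{np_k}$), and the final appeal to Theorem \ref{thmsec3} with a polynomial dominating function, tracking $r^7$ through the double sum block and the $h$-dependence through $h_6$ and $\tilde h_3$. Where you differ is that you supply two steps the paper's proof passes over in silence, and both are genuinely needed. First, for non-integer $\lambda$ the function $b$ is not even defined on all of $\mathbb{R}^r$, whereas membership in $C_{Q,n^{-1/2}}^6(\mathbb{R}^r)$ --- and indeed the quantity $\mathbb{E}h(g(\Sigma^{1/2}\mathbf{Z}))$ appearing in Theorem \ref{thmsec3}, since the Gaussian vector exits the region $\{w_k\geq-\sqrt{np_k}\}$ with positive probability --- requires a globally defined $g$; your $C^6$ polynomial-growth extension $b^\ast$, together with the observation that $g^\ast(\mathbf{W})=T_\lambda(\mathbf{W})$ a.s$.$ because $U_j\geq0$, repairs this, while the paper only records the crude derivative bounds on the region and then asserts the membership. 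Second, Theorem \ref{thmsec3} controls the distance to $\mathbb{E}h(g^\ast(\Sigma^{1/2}\mathbf{Z}))$, not to $\chi_{(r-1)}^2h=\mathbb{E}h(a(\Sigma^{1/2}\mathbf{Z}))$; the paper's proof does not address this discrepancy at all, and your treatment of it --- splitting on $G$, Gaussian tail bounds off $G$, the quadratic term via $\mathbb{E}[R^2\mathbf{1}_G]=O(n^{-1})$, and the odd/even cancellation $\mathbb{E}[h'(a)R_3]=0$ that lifts the naive $O(n^{-1/2})$ to $O(n^{-1})$ --- is exactly the right mechanism, mirroring the symmetry argument the paper itself uses inside Lemma \ref{evennormal} and (\ref{onway2}). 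One small repair to your own sketch: on $G$ the coordinates are unbounded above, so the binomial series (\ref{seriesexp}) need not converge there; the bound $|R_{\geq4}|\leq n^{-1}\times(\text{polynomial with }O(1)\text{ coefficients})$ on $G$ should instead come from Taylor's theorem with Lagrange remainder applied to $x\mapsto(1+x)^{\lambda+1}$, which gives precisely the estimate you assert. In short, yours is not a different method but the paper's method carried out in full, and it buys a proof in which every quantity invoked is actually well defined.
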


\begin{conjecture}The $O(n^{-1})$ rate of Theorem \ref{thmpd}  holds for all $\lambda\in\mathbb{R}$.
\end{conjecture}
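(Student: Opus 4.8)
The plan is to realise $T_\lambda$ as a statistic of the form $g(\mathbf{w})=a(\mathbf{w})+n^{-1/2}b(\mathbf{w})$ covered by Theorem \ref{thmsec3}, and then to compare the surrogate limit $g(\Sigma^{1/2}\mathbf{Z})$ with the genuine $\chi_{(r-1)}^2$ law. Guided by the decomposition (\ref{tformula}) I would take $a(\mathbf{w})=\sum_{j=1}^rw_j^2$ (even) and $b(\mathbf{w})=\sqrt{n}\,R(\mathbf{w})$, so that $\delta=n^{-1/2}$ and $g=a+\delta b$. Because $\Sigma$ has the multinomial form with $\Sigma\sqrt{\mathbf{p}}=\mathbf{0}$, one has $a(\Sigma^{1/2}\mathbf{Z})=\sum_j(\Sigma^{1/2}\mathbf{Z})_j^2\sim\chi_{(r-1)}^2$, and the triangle inequality gives
\begin{align*}
|\mathbb{E}h(T_\lambda(\mathbf{W}))-\chi_{(r-1)}^2h|
&\leq|\mathbb{E}h(g(\mathbf{W}))-\mathbb{E}h(g(\Sigma^{1/2}\mathbf{Z}))|\\
&\quad+|\mathbb{E}h(g(\Sigma^{1/2}\mathbf{Z}))-\mathbb{E}h(a(\Sigma^{1/2}\mathbf{Z}))|.
\end{align*}
I would control the two summands by Theorem \ref{thmsec3} and by a symmetry argument, respectively.

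For the first summand I would verify that $g\in C_{Q,n^{-1/2}}^6(\mathbb{R}^r)$ for a dominating function $Q(\mathbf{w})=A+\sum_tB_t|w_t|^{r_t}$. Since $g$ is separable, the required estimates on $|\partial^k a|^{6/k}$, $|\partial^k b|^{6/k}$ and the products $|b|\,|\partial^k a|^{6/k}$ all reduce to one-dimensional bounds on $(1+w_j/\sqrt{np_j})^{\lambda+1}$ and its derivatives; and since the $X_{ij}=(I_{ij}-p_j)/\sqrt{p_j}$ are bounded, every moment $\mathbb{E}|X_{ij}|^{r_k+4}$ exists. Feeding the resulting $\lambda$- and $p_j$-dependent constants $A,B_t,r_t$ into Theorem \ref{thmsec3} yields a bound of order $n^{-1}$, in which the double index sum of the $\psi$-term in $M$ dominates the $r$-dependence and produces the factor $r^7$, with test-function dependence $h_6+\tilde h_3$.

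For the second summand I would Taylor expand $h$ about $a(\Sigma^{1/2}\mathbf{Z})$; since $g-a=R$,
\begin{equation*}
\mathbb{E}h(g(\Sigma^{1/2}\mathbf{Z}))-\mathbb{E}h(a(\Sigma^{1/2}\mathbf{Z}))=\mathbb{E}\big[h'(a(\Sigma^{1/2}\mathbf{Z}))R(\Sigma^{1/2}\mathbf{Z})\big]+O\big(\|h''\|\,\mathbb{E}R(\Sigma^{1/2}\mathbf{Z})^2\big).
\end{equation*}
The expansion (\ref{seriesexp}) shows that $R$ has an odd leading term of order $n^{-1/2}$ (the $k=3$ cubic) and an even term only at order $n^{-1}$; as $h'(a(\cdot))$ is even and $\Sigma^{1/2}\mathbf{Z}\stackrel{\mathcal{D}}{=}-\Sigma^{1/2}\mathbf{Z}$, the odd part integrates to zero, leaving $\mathbb{E}[h'(a)R]=O(n^{-1})$, while $\mathbb{E}R^2=O(n^{-1})$ as well. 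This is exactly the mechanism that inequality (\ref{onway2}) and Section 2.3 were constructed to exploit.

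The main obstacle is the first step --- certifying $g\in C_{Q,n^{-1/2}}^6$ --- and it is precisely what forces the hypotheses on $\lambda$. For $\lambda$ a positive integer, $(1+w_j/\sqrt{np_j})^{\lambda+1}$ is a polynomial, the series (\ref{seriesexp}) terminates, $g$ is globally defined, and the membership is immediate. For non-integer $\lambda$ two difficulties arise: first, $(1+w_j/\sqrt{np_j})^{\lambda+1}$ is not real for $w_j<-\sqrt{np_j}$, so $g$ must be replaced by a smooth extension $\tilde g$ agreeing with it on $\{w_j>-\sqrt{np_j}\ \forall j\}$ --- harmless since $W_j\geq-\sqrt{np_j}$ always and $\Sigma^{1/2}\mathbf{Z}$ leaves this region with super-polynomially small probability; second, the $k$-th derivative of $(1+x)^{\lambda+1}$ behaves like $(1+x)^{\lambda+1-k}$, which stays bounded near $x=-1$ for all $k\leq6$ only when $\lambda>5$. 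For non-integer $\lambda\in(-1,5]$ the sixth derivative blows up at the boundary and the present scheme cannot establish $g\in C_{Q,n^{-1/2}}^6$; this is the gap that leaves the full range $\lambda\in\mathbb{R}$ as a conjecture. I would therefore treat the integer and $\lambda>5$ cases under the common argument above, performing the extension and the one-dimensional derivative bounds only in the latter.
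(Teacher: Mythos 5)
You did not prove the statement you were assigned, and you say so yourself. The statement is the paper's Conjecture --- that the $O(n^{-1})$ rate holds for \emph{all} $\lambda\in\mathbb{R}$ --- whereas your proposal is essentially a reconstruction of the paper's proof of Theorem \ref{thmpd}: the decomposition $T_\lambda=a+n^{-1/2}b$ with $a(\mathbf{w})=\sum_jw_j^2$, the verification that $T_\lambda\in C_{Q,n^{-1/2}}^6(\mathbb{R}^r)$ when $\lambda\in\mathbb{Z}^+$ (the binomial expansion terminates and $b$ is a polynomial) or $\lambda>5$ (the derivatives behave like $(1+x)^{\lambda+1-k}$ and stay bounded near $x=-1$ for $k\leq6$), followed by an application of Theorem \ref{thmsec3}. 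That matches the paper's argument for the theorem. But the conjecture is precisely about the cases your argument abandons: non-integer $\lambda\leq5$, which include the log-likelihood ratio statistic ($\lambda=0$) and the Freeman--Tukey statistic ($\lambda=-1/2$). There the sixth-order derivatives of $b$ have a singularity at $w_k=-\sqrt{np_k}$, a point in the support of $W_k$, so no polynomial dominating function exists and Theorem \ref{thmsec3} cannot be invoked. You offer no new idea to get around this --- for instance, no truncation argument exploiting that $W_k$ attains $-\sqrt{np_k}$ only on the event $U_k=0$, which has probability $(1-p_k)^n$; no enlarged function class; no modified Stein equation --- and you explicitly concede that ``the present scheme cannot establish'' the required membership. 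So, read as a proof of the conjecture, the proposal has a genuine gap: the entire conjectural range is untouched.

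In fairness, the paper does not prove the conjecture either; it is left open, supported only by two heuristics (the Kolmogorov-distance bounds of Ulyanov--Zubov and of Asylbekov et al$.$, valid for all $\lambda\in\mathbb{R}$, and the series formula (\ref{seriesexp}) showing $R(\mathbf{w})$ is formally $O(n^{-1/2})$). Your position therefore coincides with the paper's, and one element of your write-up is a genuine refinement of the \emph{theorem's} proof rather than of the conjecture: Theorem \ref{thmsec3} bounds $|\mathbb{E}h(g(\mathbf{W}))-\mathbb{E}h(g(\Sigma^{1/2}\mathbf{Z}))|$, yet the bound (\ref{boundpds}) is stated against $\chi_{(r-1)}^2h=\mathbb{E}h(a(\Sigma^{1/2}\mathbf{Z}))$; the paper passes over this second comparison silently, while your triangle-inequality step handles it explicitly, using that the $O(n^{-1/2})$ part of $R$ is odd, $h'(a(\cdot))$ is even, and $\Sigma^{1/2}\mathbf{Z}\stackrel{\mathcal{D}}{=}-\Sigma^{1/2}\mathbf{Z}$, so the first-order term vanishes and the discrepancy is $O(n^{-1})$. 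That is a worthwhile observation, but it concerns the already-proven cases $\lambda\in\mathbb{Z}^+$ and $\lambda>5$, not the open range that the conjecture is about.
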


\medskip
\noindent{\emph{Proof of Theorem \ref{thmpd}.}}
Let $a(\mathbf{w})=\sum_{j=1}^rw_j^2$ and $b(\mathbf{w})=\sqrt{n}R(\mathbf{w})$, so that $T_\lambda(\mathbf{w})=a(\mathbf{w})+n^{-1/2}b(\mathbf{w})$.  To apply Theorem \ref{thmsec3}, we need to find a polynomial dominating function $Q$ such that $T_\lambda\in C_{Q,n^{-1/2}}^6(\mathbb{R}^r)$.  We shall now show that we can find such a dominating function if either $\lambda$ is a positive integer or any real number greater than 5.   We have that $\frac{\partial a(\mathbf{w})}{\partial w_k}=2w_k$ and $\frac{\partial^2a(\mathbf{w})}{\partial w_k^2}=2$ and all other derivatives are equal to 0.  Recall that
\begin{align*}b(\mathbf{w})&=\frac{2\sqrt{n}}{\lambda(\lambda+1)}\sum_{j=1}^rnp_j\bigg[\bigg(1+\frac{w_j}{\sqrt{np_j}}\bigg)^{\lambda+1}\\
&\quad-1-(\lambda+1)\frac{w_j}{\sqrt{np_j}}-\frac{\lambda(\lambda+1)}{2}\frac{w_j^2}{np_j}\bigg].
\end{align*} 

If $\lambda$ is a positive integer greater than 1 ($b(\mathbf{w})=0$ when $\lambda=1$), then
\begin{align*}b(\mathbf{w})=\frac{2\sqrt{n}}{\lambda(\lambda+1)}\sum_{j=1}^rnp_j\bigg\{\binom{\lambda+1}{3}\bigg(\frac{w_j}{\sqrt{np_j}}\bigg)^3+\cdots+\bigg(\frac{w_j}{\sqrt{np_j}}\bigg)^{\lambda+1}\bigg\}.
\end{align*}
Thus, when $\lambda$ is a positive integer, the function $b$ is a $O(1)$ polynomial in $r$ variables, and all its partial derivatives of any order are $O(1)$ polynomials or simply 0.  As the derivatives of $a$ are also $O(1)$ polynomials or 0, it follows that there exists a $O(1)$ polynomial dominating function of the form $Q(\mathbf{w})=A+\sum_{i=1}^rB_i|w_i|^{r_i}$ such that $T_\lambda\in C_{Q,n^{-1/2}}^6(\mathbb{R}^r)$.  The random variables $X_{ij}$ have bounded support and all their moments exist and are $O(1)$ (with respect to $n$) and also all moments of the $W_j$ are $O(1)$.  Thus, applying Theorem \ref{thmsec3} for the case $\lambda\in\mathbb{Z}^+$ yields a bound of the form (\ref{boundpds}).  To obtain an explicit form for $C(\lambda,p_1,\ldots,p_r)$, we would need to compute the relevant expectations involving the $X_{ij}$ and $W_j$.

Now let us suppose that $\lambda$ is not necessarily an integer.  For $1\leq k\leq r$, the derivatives of $b$ are given by
\begin{align*}\frac{\partial b(\mathbf{w})}{\partial w_k}&=\frac{2n\sqrt{p_k}}{\lambda}\bigg[\bigg(1+\frac{w_k}{\sqrt{np_k}}\bigg)^\lambda-1-\frac{\lambda w_k}{\sqrt{np_k}}\bigg], \\
\frac{\partial^2b(\mathbf{w})}{\partial w_k^2}&=2\sqrt{n}\bigg[\bigg(1+\frac{w_k}{\sqrt{np_k}}\bigg)^{\lambda-1}-1\bigg], \\
\frac{\partial^mb(\mathbf{w})}{\partial w_k^m}&=\frac{2(\lambda-1)\cdots(\lambda+2-m)\sqrt{n}}{(np_k)^{m/2-1}}\bigg(1+\frac{w_k}{\sqrt{np_k}}\bigg)^{\lambda+1-m}, \quad m\geq3,
\end{align*}
and all other derivatives are equal to 0.   Recall that to apply Theorem \ref{thmsec3}, we need to find a polynomial dominating function $Q$ such that $T_\lambda\in C_{Q,n^{-1/2}}^6(\mathbb{R}^r)$.  This is not possible if $\lambda$ is strictly less than 5 and not a positive integer, because in such cases there will be singularity at $w_k=-\sqrt{np_k}$ (note that this value is in the support of $W_k$) for at least one of the derivatives of order 6 or less.  Since we have already covered the positive integer case, we suppose $\lambda\geq5$.

Let us first note the following crude inequalities.  Let $\alpha\geq3$.  Then, for any $x\geq-1$,
\begin{eqnarray*}\big|(1+x)^\alpha-1-\alpha x-\tfrac{1}{2}\alpha(\alpha-1)x^2\big|&\leq& 2^\alpha(|x|^3+|x|^\alpha), \\
\big|(1+x)^\alpha-1-\alpha x\big|&\leq& 2^\alpha(x^2+|x|^\alpha), \\
\big|(1+x)^\alpha-1\big|&\leq& 2^\alpha(|x|+|x|^\alpha), \\
(1+x)^\alpha &\leq& 2^\alpha(1+|x|^\alpha). 
\end{eqnarray*}
With these inequalities, we obtain that, for $w_k\geq-\sqrt{np_k}$ for all $k=1,\ldots,r$,
\begin{align*}|b(\mathbf{w})|&\leq\frac{2^{\lambda+2}}{\lambda(\lambda+1)}\sum_{j=1}^rn^{3/2}p_j\bigg(\bigg|\frac{w_j}{\sqrt{np_j}}\bigg|^3+\bigg|\frac{w_j}{\sqrt{np_j}}\bigg|^{\lambda+1}\bigg), \\
\bigg|\frac{\partial b(\mathbf{w})}{\partial w_k}\bigg|&\leq\frac{2^{\lambda+1}n\sqrt{p_k}}{\lambda}\bigg(\bigg(\frac{w_k}{\sqrt{np_k}}\bigg)^2+\bigg|\frac{w_k}{\sqrt{np_k}}\bigg|^{\lambda}\bigg), \\
\bigg|\frac{\partial^2 b(\mathbf{w})}{\partial w_k^2}\bigg|&\leq 2^\lambda\sqrt{n}\bigg(\bigg|\frac{w_k}{\sqrt{np_k}}\bigg|+\bigg|\frac{w_k}{\sqrt{np_k}}\bigg|^{\lambda-1}\bigg), \\
\bigg|\frac{\partial^mb(\mathbf{w})}{\partial w_k^m}\bigg|&\leq \frac{2^{\lambda+2-m}(\lambda-1)\cdots(\lambda+2-m)\sqrt{n}}{(np_k)^{m/2-1}}\bigg(1+\bigg|\frac{w_k}{\sqrt{np_k}}\bigg|^{\lambda+m-1}\bigg),
\end{align*}
for $3\leq m\leq 6$.  Thus, if $\lambda\geq5$, all derivatives up to sixth order of $b$ are bounded above by a $O(1)$ polynomial in the region $w_k\geq-\sqrt{np_k}$, $k=1,\ldots,r$.  We can therefore argue as we did for the $\lambda\in\mathbb{Z}^+$ case to conclude that when $\lambda\geq5$ there exists a $O(1)$ polynomial dominating function $Q(\mathbf{w})=A+\sum_{i=1}^rB_i|w_i|^{r_i}$ such that $T_\lambda\in C_{Q,n^{-1/2}}^6(\mathbb{R}^r)$.  That the bound is of the form (\ref{boundpds}) also follows by the same argument.  Having dealt with the cases $\lambda\in\mathbb{Z}^+$ and $\lambda\geq5$, we have completed the proof.   \hfill $\square$

\begin{remark}

\begin{enumerate}

\item To prove Theorem \ref{thmpd}, it sufficed to prove that there exists a $O(1)$ polynomial dominating function $Q$ such that $T_\lambda\in C_{Q,n^{-1/2}}^6(\mathbb{R}^r)$ for $\lambda\in\mathbb{Z}^+$ and $\lambda\geq5$.  However, it is easy to verify that we could have taken the following dominating function:
\begin{align}\label{q1}Q(\mathbf{w})&=A^\lambda+\sum_{j=1}^rB_j^\lambda\bigg(|w_j|^{12}+\bigg|\frac{w_j}{\sqrt{np_j}}\bigg|^{6\lambda}\bigg)\\
\label{q2}&\leq \tilde{A}^\lambda+\sum_{j=1}^r\tilde{B}_j^\lambda|w_j|^{6\lambda},
\end{align}
where $A^\lambda$, $\tilde{A}^\lambda$, $\tilde{B}_1^\lambda,\ldots,\tilde{B}_r^\lambda$ and $B_1^\lambda,\ldots,B_r^\lambda$ are $O(1)$ non-negative constants involving only $\lambda$.  We could directly apply Theorem \ref{thmsec3} with the dominating function (\ref{q2}) to obtain a bound for $|\mathbb{E}h(T_\lambda(\mathbf{W}))-\chi_{(r-1)}^2h|$.  Alternatively, we that we could easily derive a variant of Theorem \ref{thmsec3} for dominating functions of the form (\ref{q1}).  This would result the same bound as that given in Theorem \ref{thmsec3}, but with an additional $O(n^{-3\lambda})$ term.  Thus, the leading order term (in $n$) of our bound for $|\mathbb{E}h(T_\lambda(\mathbf{W}))-\chi_{(r-1)}^2h|$ would result from applying Theorem \ref{thmsec3} with the dominating function $A^\lambda+\sum_{j=1}^rB_j^\lambda|w_j|^{12}$.  The bound would then be computed by bounding the appropriate expectations involving the $X_{ij}$ and $W_j$.  Since $r_1=\ldots=r_r=12$, the values of these expectations would not depend on $\lambda$.  Therefore the performance of the bound based on $p_1,\ldots,p_r$ would remain the same for any $r\in\mathbb{Z}$ or $r\geq5$.

\item Given that the application of Theorem \ref{thmsec2} to Pearson's statistic resulted in a bound with a poor dependence on $p_1,\ldots,p_r$ and $r$ compared to the existing bound of \cite{gaunt chi square}, we would also expect that the application of Theorem \ref{thmsec3} to the power divergence statistics would result in a bound with far from optimal dependence on these values. Again, we expect this to be the case, because we obtain our bounds by applying a general bound.  A possible direct for future research would be to proceed in the spirit of \cite{gaunt chi square} and use an approach specifically targeted at the power divergence statistics to obtain a bound with a better dependence on these values.

\end{enumerate}
\end{remark}

\begin{remark}Unless $\lambda\in\mathbb{Z}^+$ or $\lambda\geq5$, the presence of a singularity at $w_k=-\sqrt{np_k}$ for a least one of the partial derivatives up to sixth order of $b$ means we cannot apply Theorem \ref{thmsec3} to $T_\lambda(\mathbf{W})$.  We do, however, conjecture that the $O(n^{-1})$ rate holds for smooth test functions for any $\lambda$.  This conjecture is based on the fact that the $O(n^{(r-1)/r})$ Kolmogorov distance bounds of \cite{asylbekov} and \cite{ulyanov} are valid for all $\lambda\in\mathbb{R}$ and also the $O(n^{-1/2})$ series formula (\ref{seriesexp}) for $R(\mathbf{w})$.  Extending the theory of this paper to deal with the whole family of power divergence is an interesting direct for future research.
\end{remark}

\section*{Acknowledgements}
The authors would like to thank Persi Diaconis for bringing this problem to our attention.  RG is supported by EPSRC grant EP/K032402/1.  GR acknowledges support from  EPSRC grants GR/R52183/01 and  EP/K032402/1.

\end{document}